\newtheorem{theorem}{Theorem}[section]
\newtheorem{proposition}[theorem]{Proposition}
\theoremstyle{theorem}
\newtheorem{maintheo}[theorem]{Main Theorem}
\theoremstyle{definition}
\newtheorem{definition}[theorem]{Definition}
\theoremstyle{remark}
\newtheorem{remark}[theorem]{Remark}
\numberwithin{equation}{section}
\newcommand{\ts}{\hspace{.11111em}}
\newcommand{\tts}{\hspace{.05555em}}
\newcommand{\R}{\mathbb{R}}
\newcommand{\T}{\mathbb{T}}
\newcommand{\Z}{\mathbb{Z}}
\DeclareMathOperator{\G}{\operatorname{\mathcal{G}}}  % Riemannian metric 
\DeclareMathOperator{\length}{\operatorname{\mathsf{Length}}\tts}
\DeclareMathOperator{\area}{\operatorname{\mathsf{Area}}\tts}
\DeclareMathOperator{\vol}{\operatorname{\mathsf{Vol}}\tts}
\newcommand{\RP}{\mathbb{R}\mathbb{P}}
\newcommand{\C}{\mathbb{C}}
\DeclareMathOperator{\sys}{\operatorname{\mathsf{Sys}}\tts}
\DeclareMathOperator{\stsys}{\operatorname{\mathsf{Stsys}}\tts}
\DeclareMathOperator{\SR}{\operatorname{\mathsf{SR}}\tts}
\DeclareMathOperator{\Mod}{\operatorname{\mathsf{Mod}}\tts}
\begin{document}

\title{$\Z_{2}$-coefficient homology $(1, 2)$-systolic freedom of $\RP^{3} \# \RP^{3}$}

\author[L.~Chen]{Lizhi Chen}

\thanks{}

%\address{Chern Institute of Mathematics, Nankai University \newline
%\hspace*{0.175in} Tianjin 300071, P.R. China}

%\address{and School of Mathematics and Statistics, Lanzhou University \newline
%\hspace*{0.175in} Lanzhou, Gansu 730000, P.R. China}

\address{Department of Mathematics, Oklahoma State University \newline 
\hspace*{0.175in} Stillwater, OK 74078}

\curraddr{Chern Institute of Mathematics, Nankai University \newline 
\hspace*{0.175in} Tianjin 300071, P.R. China
}

\email{lizhi@ostatemail.okstate.edu}

%\thanks{Partially supported by NSF Grant DMS9626166. }
%\dedicatory{Dedicated to }

\subjclass[2010]{Primary 53C23}

\date{October 11, 2014.}

\begin{abstract}
We prove the $3$-manifold $\RP^3 \# \RP^3$ is of $\Z_{2}$-coefficient homology $(1, 2)$-systolic freedom. Given a Riemannian metric on $\RP^{3}\# \RP^{3}$, we define $\Z_{2}$-coefficient homology $1$-systole as the infimum of lengths of all nonseparating geodesic loops representing nontrivial classes in $H_{1}(\RP^3\#\RP^3; \Z_{2})$. The $\Z_{2}$-coefficient homology $2$-systole is defined to be the infimum of areas of all nonseparating surfaces representing nontrivial classes in $H_{2}(\RP^{3}\#\RP^{3}; \Z_2)$. In the paper we show that there exists a sequence of Riemannian metrics on $\RP^{3} \# \RP^{3}$ such that the volume of $\RP^3 \# \RP^3$ cannot be bounded below in terms of the product of $\Z_{2}$-coefficient homology $1$-systole and $\Z_{2}$-coefficient homology $2$-systole.
\end{abstract}

\maketitle 

\tableofcontents

\section{Introduction}

Let $M$ be a compact $3$-manifold with the Riemannian metric $\mathcal{G}$, denoted $(M, \G)$. The length of a loop $\gamma$ in $(M, \G)$ is denoted $\length_{\G}(\gamma)$, and the area of a surface $\Sigma$ in $(M, \G)$ is denoted $\area_{\G}(\Sigma)$. Let $\Z$ be the ring of all integers and $\Z_{2}$ be the ring $ \Z / 2\Z $. We use the convention that a $3$-manifold is closed if it is compact and without boundary.
\begin{definition}
Let $(M, \G)$ be a closed compact Riemannian $3$-manifold. 
\begin{enumerate} 
 \item The $\Z_{2}$-coefficient homology $1$-systole of $(M, \G)$, denoted 
 \[ \sys H_{1}(M, \G; \Z_{2}), \]
 is defined as
  \begin{equation*}
   \inf_{\ell} \tts \length_{\G}(\ell),
  \end{equation*}
  where the infimum is taken over all nonseparating loops $\ell$ which represent nontrivial classes in $H_{1}(M; \Z_{2})$.
  \item The $\Z_{2}$-coefficient homology $2$-systole of $(M, \G)$, denoted 
   \[ \sys H_{2}(M, \G; \Z_{2}) , \]
   is defined as
   \begin{equation*}
    \inf_{\Sigma} \tts \area_{\G}(\Sigma),
   \end{equation*}
   where the infimum is taken over all nonseparating surfaces $\Sigma$ which represent nontrivial classes in $H_{2}(M; \Z_{2})$.
 \end{enumerate}
\end{definition}
In this paper, we investigate the question of whether the Riemannian volume of a $3$-manifold $M$ can be bounded below in terms of its $\Z_{2}$-coefficient homology systoles.
\begin{definition}
 A closed compact $3$-manifold is of $\Z_{2}$-coefficient homology $(1, 2)$-systolic freedom if
 \begin{equation*}
  \inf_{\G} \tts \frac{\vol_{\G}(M)}{\sys H_{1}(M, \G; \Z_{2}) \cdot \sys H_{2}(M, \G; \Z_{2})} = 0,
 \end{equation*}
 where the infimum is taken over all Riemannian metrics $\G$ on $M$.
 \label{Z2_freedom}
\end{definition}
Let $\RP^3 \# \RP^3$ be the connected sum of two real projective $3$-spaces. We prove the following result in this paper.
\begin{maintheo}
 The $3$-manifold $\RP^{3} \# \RP^{3}$ is of $\Z_{2}$-coefficient homology $(1, 2)$-systolic freedom.
 \label{Chen14}
\end{maintheo}

Previously in 1999, Freedman \cite{Fr99} proved the following result of $\Z_2$-coefficient homology $(1, 2)$-systolic freedom.
\begin{theorem}[Freedman \cite{Fr99}]
 The $3$-manifold $S^{2} \times S^{1}$ is of $\Z_{2}$-coefficient homology $(1, 2)$-systolic freedom.
\end{theorem}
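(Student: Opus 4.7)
The plan follows Freedman's original strategy: construct Riemannian metrics on $S^{2}\times S^{1}$ by first designing a Riemannian $3$-manifold $M_{n}$ whose underlying smooth topology is, by a separate verification, diffeomorphic to $S^{2}\times S^{1}$, and whose geometry concentrates along a thin carefully chosen $2$-complex. The conceptual challenge is to overcome the product-metric identity $\vol = \sys H_{1}\cdot \sys H_{2}$ for the standard metric; this must be broken by arranging the geometry so that the two homology generators are realized by representatives of disparate scale.

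For each $n$, I would take a curve $\gamma_{n}$ of length $L_{n}\to \infty$, realized as the ``backbone'' of a $2$-complex $K_{n}\subset M_{n}$ carrying a chain of $2$-dimensional pieces $\Sigma_{n,i}$ ($i=1,\dots,N_{n}$) attached along $\gamma_{n}$. Crucially, the pieces $\Sigma_{n,i}$ are attached in a pattern--analogous to Freedman's ``spine'' configuration--such that no single $\Sigma_{n,i}$ on its own represents the generator of $H_{2}(S^{2}\times S^{1};\Z_{2})$, yet any $\Z_{2}$-cycle in that class must incorporate area from $\Omega(N_{n})$ of them. I would then thicken $K_{n}$ to an $\epsilon_{n}$-neighborhood $U_{n}$ with a locally Euclidean metric and collapse the complement by a tiny conformal factor, so that $\vol(\G_{n})\lesssim L_{n}\epsilon_{n}^{2} + N_{n}\cdot \mathrm{area}(\Sigma_{n,i})\cdot \epsilon_{n} + o(1)$. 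A topological check--relying on the fact that each $\Sigma_{n,i}$ separates and that $M_{n}\cong S^{2}\times S^{1}$--is used to ensure the collapsed complement carries no nontrivial $H_{1}$-class, so that $\sys H_{1}(\G_{n};\Z_{2})\gtrsim L_{n}$, as any loop in the nontrivial $H_{1}$-class is forced to traverse $\gamma_{n}$.

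The main obstacle, and Freedman's principal contribution, is the area lower bound $\sys H_{2}(\G_{n};\Z_{2})\gtrsim c\, N_{n}\cdot \mathrm{area}(\Sigma_{n,i})$: this demands both the combinatorial arrangement of pieces (to rule out cheap single-piece representatives) and an intersection-theoretic counting argument forcing any competitor $2$-cycle homologous to $[S^{2}\times\{\mathrm{pt}\}]$ to intersect $\Omega(N_{n})$ pieces with positive local area. Verifying that such an ``expander-like'' combinatorial pattern can be realized as an embedded subcomplex of $S^{2}\times S^{1}$ with geometry under analytic control, and then proving the matching isoperimetric estimate for $\Z_{2}$-cycles in the thickened neighborhood, is the technical heart of the proof and the step I expect to be the primary obstacle. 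Once it is in place, tuning $(L_{n},N_{n},\epsilon_{n})$ gives $\vol(\G_{n})/(\sys H_{1}(\G_{n};\Z_{2})\cdot \sys H_{2}(\G_{n};\Z_{2}))=O(1/N_{n})\to 0$, establishing the theorem.
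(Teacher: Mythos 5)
Your proposal correctly identifies the shape of the problem (the product identity for nice metrics must be broken, and the crux is an area lower bound for $\Z_{2}$-cycles in the nontrivial $H_{2}$ class), but it leaves precisely that crux unproved and does not supply the mechanism Freedman actually uses. The ``expander-like'' behaviour you postulate is not an ad hoc combinatorial pattern of $2$-pieces glued along a backbone curve: in \cite{Fr99} it is furnished by a sequence of arithmetic hyperbolic surfaces $\Sigma_{g_{k}}$ (congruence covers coming from a quaternion algebra) enjoying simultaneously a homotopy systole $\gtrsim \log g_{k}$, a uniform lower bound on the first Laplace eigenvalue (Buser--Sarnak) and hence a uniform isoperimetric constant, and diameter $\lesssim \log g_{k}$. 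One then forms the mapping torus $M_{g_{k}}$ over a finite-order isometry $\tau_{k}$ of order $\gtrsim (\log g_{k})^{1/2}$, which carries an $\mathbb{H}^{2}\times\R$ metric, and converts it to $S^{2}\times S^{1}$ by thin Dehn surgeries along the Dehn-twist curves of $\tau_{k}$ (quantitatively controlled via the Freedman--Meyer--Luo version of Dehn--Lickorish) and along a homology basis of the fiber. The bound $\sys H_{2}\geqslant c\,g_{k}$ then follows because a $\Z_{2}$-cycle generating $H_{2}$ must meet almost every fiber with odd multiplicity, so the coarea inequality combined with the uniform isoperimetric constant of $\Sigma_{g_{k}}$ forces its area to be comparable to $\area(\Sigma_{g_{k}})=4\pi(g_{k}-1)$. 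Nothing in your sketch replaces this arithmetic and spectral input; ``verifying that such a pattern can be realized with the isoperimetric estimate'' is essentially the theorem itself, not a step of its proof.

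Two quantitative points indicate the sketch cannot be repaired as written. First, the claimed gain $O(1/N_{n})$ is far stronger than what any known construction yields: in Freedman's metrics $\vol\lesssim g_{k}$ and $\sys H_{2}\gtrsim g_{k}$ are comparable, and the entire systolic freedom comes from $\sys H_{1}\gtrsim(\log g_{k})^{1/2}$ alone, so the ratio decays only like $(\log g_{k})^{-1/2}$. Second, your bound $\sys H_{1}\gtrsim L_{n}$ ``because any nontrivial loop must traverse $\gamma_{n}$'' is unsubstantiated: a loop generating $H_{1}(S^{2}\times S^{1};\Z_{2})$ need only cross the $H_{2}$ generator once, and nothing in your thickened-spine geometry prevents it from crossing the thin region transversally with length $O(\epsilon_{n})$. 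In the actual proof the length lower bound comes from lifting the loop into the hyperbolic fiber (where the arithmetic systole bound applies) or from its winding $\mathrm{Order}(\tau_{k})$ times in the circle direction, which is exactly why the order of $\tau_{k}$ must be made large.
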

Freedman's theorem provides the first answer to the following question. Let $M$ be a closed compact $3$-manifold with nontrivial $H_1(M; \Z_2)$ and $H_2(M; \Z_2)$. It is not known if there exists a positive constant $C$ so that
\begin{equation*}
 \inf_{\G} \tts \frac{\vol_{\G}(M)}{\sys H_{1}(M, \G; \Z_{2}) \cdot \sys H_{2}(M, \G; \Z_{2})} \geqslant C ,
\end{equation*}
where the infimum is taken over all Riemannian metrics $\G$ on $M$. Gromov conjectured the existence of such a positive constant $C$, see \cite{KaSu98} or \cite{Fr99}. However, Freedman's theorem gives a counterexample to Gromov's conjecture. Our theorem 1.3 is the second result for $\Z_2$-coefficient homology $(1, 2)$-systolic freedom of $3$-manifolds. 

Similarly, the $\Z$-coefficient homology $1$-systole of a Riemannian $3$-manifold $(M, \G)$, denoted $\sys H_{1}(M, \G; \Z)$, is defined to be the infimum of lengths of all nonseparating loops which represent nontrivial classes in $H_{1}(M; \Z)$. The $\Z$-coefficient homology $2$-systole of $(M, \G)$, denoted
\[ \sys H_{2}(M, \G; \Z) , \]
is defined to be the infimum of areas of all nonseparating surfaces which represent nontrivial classes in $H_{2}(M; \Z)$. The $\Z$-coefficient homology $(1, 2)$-systolic freedom of $(M, \G)$ is defined in a way similar to Definition \ref{Z2_freedom}, see Babenko and Katz \cite[1.6]{BaKa98}.
\begin{theorem}[Bergery and Katz \cite{BeKa94}]
 The $3$-manifold $S^{2} \times S^{1}$ is of $\Z$-coefficient homology $(1, 2)$-systolic freedom.
\end{theorem}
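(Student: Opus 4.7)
The plan is to exhibit a sequence of Riemannian metrics $\{\G_n\}_{n \geq 1}$ on $S^2 \times S^1$ whose systolic ratio
\[ \frac{\vol_{\G_n}(S^2 \times S^1)}{\sys H_1(S^2 \times S^1, \G_n; \Z) \cdot \sys H_2(S^2 \times S^1, \G_n; \Z)} \]
tends to $0$. Because any warped product metric $a(t)^2 g_{S^2} + b(t)^2 dt^2$ satisfies $\vol = 4\pi \int a(t)^2 b(t)\, dt \geq 4\pi (\min_t a(t))^2 \int b(t)\, dt = \sys H_1 \cdot \sys H_2$, every such metric has systolic ratio at least $1$, so any successful construction must be fundamentally non-fibered. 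My approach exploits the topological fact that $S^2 \times S^1$ arises from $S^3$ by $0$-framed Dehn surgery along an unknot.

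Concretely, I would first equip $S^3$ with a Berger-type collapsing metric $h_\epsilon$ in which one distinguished circle direction is squeezed to length $\epsilon$, while the transverse directions remain of fixed size, yielding $\vol(h_\epsilon) = O(\epsilon)$. I then remove a tubular neighborhood $V$ of a short core loop $U$ and re-glue a solid torus $W$ with the meridian of $W$ attached to the longitude of $V$, so that the resulting manifold is diffeomorphic to $S^2 \times S^1$. The metric is extended across $W$ via an explicit local model whose volume is $O(\epsilon^2)$, so the total volume of the resulting metric $\G_\epsilon$ on $S^2 \times S^1$ is $O(\epsilon)$.

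The technical heart of the proof is establishing uniform positive lower bounds on both systoles via calibration. For $\sys H_1(\G_\epsilon; \Z)$, I would construct a smooth closed 1-form $\alpha_\epsilon$ on $S^2 \times S^1$ representing a generator of $H^1(S^2 \times S^1; \Z)$ with pointwise norm bounded independently of $\epsilon$; the calibration inequality then yields $\sys H_1 \geq 1/\|\alpha_\epsilon\|_\infty \geq c_1 > 0$. The natural candidate for $\alpha_\epsilon$ is the 1-form dual to the non-collapsed angular coordinate, whose norm stays bounded provided the surgery is performed on the collapsing direction. Symmetrically, for $\sys H_2(\G_\epsilon; \Z)$ I would construct a closed 2-form $\beta_\epsilon$ representing a generator of $H^2(S^2 \times S^1; \Z)$ with bounded norm, giving $\sys H_2 \geq 1/\|\beta_\epsilon\|_\infty \geq c_2 > 0$. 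Combining these three estimates yields the systolic ratio $O(\epsilon)/(c_1 c_2) \to 0$.

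The main obstacle will be the construction of the bounded-norm calibrating 2-form $\beta_\epsilon$ across the surgery region $W$. A natural 2-form on the Berger region does not a priori extend to a globally closed integral 2-form on $S^2 \times S^1$, since the topology of the handle $W$ differs from that of $V$. Resolving this requires a careful local model for the metric on $W$, combined with a Mayer--Vietoris-style patching of closed forms across the gluing torus, and the verification that the patched form is simultaneously integer-valued and of bounded pointwise norm throughout. This step is where the interplay between the Berger degeneration and the Dehn surgery must be arranged with precision.
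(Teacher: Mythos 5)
The paper does not actually prove this statement --- it is quoted from B\'erard-Bergery and Katz \cite{BeKa94} --- so your proposal stands or falls on its own. It falls, and the gap is visible at two levels. Concretely: in the Berger metric $h_\epsilon$ \emph{every} Hopf fiber has length $O(\epsilon)$, not just the one you surger. If $U$ is the surgered fiber and $U'$ is any other Hopf fiber, then $U'$ lies in the untouched region, and since $\mathrm{lk}(U',U)=1$ we have $[U']=\mu$ in $H_1(S^3\setminus U;\Z)\cong\Z\langle\mu\rangle$; the $0$-framed filling kills only the Seifert longitude $\lambda$, so $[U']$ survives as the generator of $H_1(S^2\times S^1;\Z)$. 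Hence $\sys H_1(S^2\times S^1,\G_\epsilon;\Z)\leqslant C\epsilon$, your candidate calibrating $1$-form must have period $1$ on a loop of length $O(\epsilon)$ and so cannot have bounded norm, and the systolic ratio is bounded \emph{below} by a positive constant instead of tending to $0$.

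The deeper problem is that your overall strategy --- volume tending to zero while both systoles are bounded below \emph{by calibration} --- is impossible on $S^2\times S^1$. If closed forms $\alpha,\beta$ represent integral generators of $H^1$ and $H^2$ with $\|\alpha\|_\infty\leqslant 1/c_1$ and $\|\beta\|_\infty\leqslant 1/c_2$, then, because the cup product of the two generators generates $H^3(S^2\times S^1;\Z)$,
\begin{equation*}
 1=\left|\int_{M}\alpha\wedge\beta\right|\leqslant C\,\|\alpha\|_\infty\,\|\beta\|_\infty\,\vol_{\G}(M),
\end{equation*}
so $\vol_{\G}(M)\geqslant c_1c_2/C$: bounded-norm calibrations force a volume lower bound. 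This is precisely Gromov's stable systolic inequality (Theorem 2.6 of this paper), which shows $\stsys_1\cdot\stsys_2\leqslant C\vol_{\G}(M)$ here; calibration can only ever bound the \emph{stable} systoles, and systolic freedom must be extracted from the gap between $\sys H_i$ and $\stsys_i$, i.e.\ from the fact that high multiples of a generator can be represented cheaply while the short representatives of the primitive classes are excluded by non-calibration arguments. Accordingly, the known constructions (B\'erard-Bergery--Katz for $\Z$, Freedman for $\Z_2$ as recalled in Section 4) do not send the volume to zero at all: they let $\vol$ and $\sys H_2$ both grow like $g_k$ and gain only the factor $\sys H_1\sim(\log g_k)^{1/2}\to\infty$, established via covering-space and isoperimetric arguments on arithmetic surfaces rather than via differential forms. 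Your proposal would need to be rebuilt around that mechanism.
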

Moreover, according to Babenko, Bergery and Katz \cite{BaKa98}, the following theorem is obtained.
\begin{theorem}[Babenko and Katz \cite{BaKa98}]
 Let $M$ be a compact orientable $3$-manifold. Then $M$ is of $\Z$-coefficient homology $(1, 2)$-systolic freedom.
\end{theorem}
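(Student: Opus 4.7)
The plan is to dispose of trivial cases first, then reduce an arbitrary compact orientable $3$-manifold to a model situation where the Bergery-Katz construction on $S^{2} \times S^{1}$ can be grafted in locally. If $H_{1}(M; \Z) = 0$ or $H_{2}(M; \Z) = 0$, then the set of nonseparating loops (respectively surfaces) representing nontrivial integral homology classes is empty, so the corresponding systole is infinite by the usual convention on empty infima, and the freedom ratio in Definition \ref{Z2_freedom} (read with $\Z$-coefficients) is automatically zero. So I assume $b_{1}(M) \geqslant 1$; Poincar\'e duality then gives $b_{2}(M) = b_{1}(M) \geqslant 1$, and both integral systoles are finite for every metric.

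Next I would decompose $M$ via Kneser-Milnor as a connect sum $M = P_{1} \# \cdots \# P_{k}$ of prime orientable $3$-manifolds. A Mayer-Vietoris computation shows that $H_{i}(M;\Z)$ splits additively over summands for $i = 1, 2$, so at least one $P_{j}$ has positive first Betti number. If $P_{j} \cong S^{2} \times S^{1}$, the previous theorem of Bergery and Katz already furnishes a sequence of metrics $\G_{n}$ on $P_{j}$ with
\begin{equation*}
\frac{\vol_{\G_{n}}(P_{j})}{\sys H_{1}(P_{j},\G_{n};\Z)\cdot \sys H_{2}(P_{j},\G_{n};\Z)} \longrightarrow 0 .
\end{equation*}
Otherwise, $P_{j}$ contains a nonseparating embedded orientable surface $\Sigma$ representing a primitive class in $H_{2}(P_{j};\Z)$, obtained by taking a regular preimage of a map $P_{j} \to S^{1}$ dual to a nontorsion element of $H^{1}(P_{j};\Z)$. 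Cutting $P_{j}$ along $\Sigma$ produces a cobordism $P_{j}'$ with two copies $\Sigma_{\pm}$ of $\Sigma$ in its boundary, identified under the gluing that recovers $P_{j}$.

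The central construction is then to replace an annular neighborhood of $\Sigma$ by a ``Bergery-Katz plug'' $\Sigma \times [0,\epsilon]$ carrying a metric modelled on the twisted metric on $S^{2} \times S^{1}$: one direction is made very short, transverse surfaces sweep out controlled area through a Hopf-type twist, and the volume is forced to scale like $\epsilon$ while both systoles remain uniformly bounded from below. Volume on the complementary region $P_{j}' \setminus (\Sigma\times[0,\epsilon])$, on the other summands $P_{i}$, and on the connect-sum necks can be taken fixed or made arbitrarily small. The main obstacle, and the step requiring most work, is to verify that no competing integral cycle evades the plug: any nonseparating loop representing a nontrivial class in $H_{1}(M;\Z)$ has algebraic intersection number with $\Sigma$ which, combined with orientability, forces it to accumulate length from the short direction of the plug, and similarly any nonseparating surface representing a nontrivial class in $H_{2}(M;\Z)$ must, after isotopy into transverse position, absorb area proportional to the plug's fiber size. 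Unlike the $\Z_{2}$ setting, one must track algebraic (signed) degree rather than parity, so the twist is designed to close up coherently under the gluing $\Sigma_{+}\sim \Sigma_{-}$, which is exactly what orientability of $M$ guarantees. Letting $\epsilon \to 0$ then drives the freedom ratio to zero, completing the proof.
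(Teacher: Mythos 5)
The paper does not prove this theorem at all: it is quoted verbatim from Babenko--Katz \cite{BaKa98} as background, so there is no in-paper argument to compare yours against. Judged on its own terms, your proposal has a genuine gap at its center. The ``Bergery--Katz plug'' is asserted rather than constructed, and the mechanism you describe is internally inconsistent. You want $\vol \to 0$ (via $\epsilon \to 0$) while both systoles stay bounded below; but if the metric on $P_{j}' \setminus (\Sigma \times [0,\epsilon])$, on the other summands, and on the necks is \emph{fixed}, the total volume is bounded below by a positive constant and the ratio does not tend to zero, whereas if those regions are \emph{shrunk}, their noncontractible loops and nonseparating surfaces become short and small, killing the systole lower bounds. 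Every known proof of $(1,2)$-systolic freedom in dimension $3$ --- Bergery--Katz, Babenko--Katz, Freedman, and the present paper --- works in the opposite regime: the manifold is made large (volume growing like $g$) while the systoles grow like $\log g$ and $g$ respectively, so the quotient decays like $1/\log g$. A local volume-collapsing plug is not a viable substitute, and the ``Hopf-type twist'' that powers the $S^{2}\times S^{1}$ case does not transplant to a collar of an arbitrary higher-genus surface $\Sigma$ without the arithmetic-surface/expander machinery that controls short cycles there.

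A second, independent error is the localization claim: it is false that ``any nonseparating loop representing a nontrivial class in $H_{1}(M;\Z)$ has algebraic intersection number with $\Sigma$.'' Only classes pairing nontrivially with the Poincar\'e dual of $[\Sigma]$ are forced to meet $\Sigma$; as soon as $b_{1}(M)\geqslant 2$, or another summand $P_{i}$ carries homology, there are nontrivial $1$-cycles and $2$-cycles that avoid the plug entirely, and their lengths and areas are governed by the metric elsewhere, which your construction leaves uncontrolled. The actual Babenko--Katz argument avoids both problems by a global technique: they show that systolic freedom is inherited from model manifolds (here $S^{1}\times S^{2}$) under suitable maps and surgeries, rather than by grafting a metric locally. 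If you want to pursue your outline, the two steps that must be supplied are (i) an explicit family of plug metrics with the correct large-volume/large-systole asymptotics, and (ii) a simultaneous lower bound on the lengths and areas of \emph{all} nontrivial integral cycles, not just those dual to $\Sigma$.
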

Katz and Suciu's result in \cite{KaSu01} leads to the following theorem.
\begin{theorem}[Katz and Suciu \cite{KaSu01}]
 For an orientable $3$-manifold $M$ with $H_{2}(M; \Z)$ torsion free,
 \begin{equation*}
  \inf_{\G} \tts \frac{\vol_{\G}(M)}{\sys H_{2}(M, \G; \Z)^{3/2} } = 0,
 \end{equation*}
 where the infimum is taken over all Riemannian metrics $\G$ on $M$.
\end{theorem}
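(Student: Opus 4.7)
The plan is to deduce this theorem from the $\Z$-coefficient homology $(1,2)$-systolic freedom of compact orientable $3$-manifolds (Theorem~1.8) together with a rebalancing of the two systoles. First, for a closed orientable $3$-manifold $M$, Poincar\'e duality gives $H_{2}(M;\Z) \cong H^{1}(M;\Z) \cong \mathrm{Hom}(H_{1}(M;\Z),\Z)$, so torsion-freeness of $H_{2}(M;\Z)$ is automatic and the hypothesis reduces to $H_{2}(M;\Z)\neq 0$ (otherwise the infimum in the statement is vacuously $0$). Applying Theorem~1.8, I obtain a sequence of Riemannian metrics $\G_{n}$ on $M$ with
\begin{equation*}
  \varepsilon_{n} := \frac{\vol_{\G_{n}}(M)}{\sys H_{1}(M,\G_{n};\Z)\cdot\sys H_{2}(M,\G_{n};\Z)} \longrightarrow 0.
\end{equation*}

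The crucial step is to arrange the sequence so that additionally $\sys H_{1}(M,\G_{n};\Z) \leq C\sqrt{\sys H_{2}(M,\G_{n};\Z)}$ for some constant $C=C(M)$ independent of $n$. The Babenko--Katz construction reduces the general orientable case by surgery to a Bergery--Katz thin-handle construction on $S^{2}\times S^{1}$ (Theorem~1.7). In that model there are two essentially independent geometric scales: a length scale $\ell$ of the short loop controlling $\sys H_{1}$, and an area scale $a$ of the pinched sphere controlling $\sys H_{2}$. Imposing the coupling $\ell\asymp\sqrt{a}$ along the sequence preserves $\varepsilon_{n}\to 0$ while securing $\sys H_{1}\asymp\sqrt{\sys H_{2}}$. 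Granted the rebalanced sequence, the theorem follows from the algebraic identity
\begin{equation*}
  \frac{\vol_{\G_{n}}(M)}{\sys H_{2}(M,\G_{n};\Z)^{3/2}} \;=\; \varepsilon_{n}\cdot\frac{\sys H_{1}(M,\G_{n};\Z)}{\sqrt{\sys H_{2}(M,\G_{n};\Z)}} \;\leq\; C\varepsilon_{n} \;\longrightarrow\; 0.
\end{equation*}

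The hard part is the rebalancing step, since the Babenko--Katz proof is not phrased with the coupling $\sys H_{1}\asymp\sqrt{\sys H_{2}}$ in mind; one must track the geometric parameters through their inductive surgery argument and verify the scales of both systoles at each stage. An alternative route, which avoids digging into their construction, uses Poincar\'e duality directly: the torsion-freeness of $H_{2}(M;\Z)$ yields a smooth closed $1$-form $\omega$ representing a primitive class in $H^{1}(M;\Z)$, and the coarea formula applied to $\omega$ produces a lower bound on $\sys H_{2}$ in terms of $\vol$ and the $L^{\infty}$ comass of $\omega$. This calibration-type lower bound complements the upper bound coming from the construction and forces the inequality $\sys H_{1}\lesssim\sqrt{\sys H_{2}}$ required for the conclusion.
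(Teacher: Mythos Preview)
The paper does not prove this theorem; it is stated in the introduction as a cited background result of Katz and Suciu \cite{KaSu01}, with no argument given. There is therefore no proof in the present paper to compare your proposal against.

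Judged on its own merits, your plan has a genuine gap. The algebraic identity you write is correct, but it shows that the entire content of the theorem, once $(1,2)$-freedom is granted, is precisely the rebalancing inequality $\sys H_{1}\lesssim\sqrt{\sys H_{2}}$ along the freedom sequence---and this is exactly the step you do not carry out. Both $\varepsilon_{n}$ and $\vol/\sys H_{2}^{3/2}$ are scale-invariant, so no rescaling helps; you must actually control the ratio $\sys H_{1}/\sqrt{\sys H_{2}}$ along the specific Babenko--Katz construction, and nothing you have written prevents it from blowing up. Your option (a) amounts to redoing that construction with additional bookkeeping, which may be feasible but is not done here. Your option (b) does not close the gap as stated: a coarea/calibration bound of the shape $\sys H_{2}\gtrsim \vol/\|\omega\|_{\infty}$ yields no control on $\sys H_{1}/\sqrt{\sys H_{2}}$ without an independent, uniform-in-$n$ link between $\|\omega\|_{\infty}$ and $\sys H_{1}$, and you have not supplied one. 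As it stands the proposal is a reformulation of the problem rather than a proof.
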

The $\Z$-coefficient homology systolic freedom widely exists, see Gromov \cite[4.45 and Appendix D]{Gr07}, B{\'e}rard-Bergery and Katz \cite{BeKa94}, Katz \cite{Ka95, Ka02}, Pittet \cite{Pi97}, Babenko and Katz \cite{BaKa98}, Babenko and Katz and Suciu \cite{BaKaSu98}, Katz and Suciu \cite{KaSu98, KaSu01}, Babenko \cite{Ba02, Ba02b}.  

 The paper is organized as follows. In section 2, we have a brief introduction to systolic geometry. In section 3, we introduce some preliminary knowledge of $3$-manifolds with surface bundle strucutre. The definition of Dehn surgery is also introduced in this section. In section 4, we review Freedman's technique in the proof of $\Z_{2}$-coefficient homology $(1, 2)$-systolic freedom of $S^{2} \times S^{1}$. In section 5, we investigate properties of $\Z_{2}$-coefficient homology systoles for $3$-manifolds with semibundle structure. Section 6 contains the proof of Main Theorem \ref{Chen14}. 

\section{A brief review of systolic geometry}

Let $M$ be a manifold with the Riemannian metric $\G$, denoted $(M, \G)$. 
\begin{definition}
 The homotopy $1$-systole of $(M, \G)$, denoted $\sys \pi_{1}(M, \G)$, is defined to be the infimum of lengths of all noncontractible loops in $M$. 
\end{definition}

The research of systolic geometry is initiated by Loewner and Pu. Loewner proved the first systolic inequality for homotopy $1$-systole on torus.
\begin{theorem}[Loewner, see Pu \cite{Pu52} or Katz \cite{Ka07}]
 For every Riemannian metric $\G$ defined on a torus $\T^{2}$,
 \begin{equation}
  \sys \pi_{1}(\T^{2}, \G)^{2} \leqslant \frac{2}{\sqrt{3}} \vol_{\G}(\T^{2}) ,   
  \label{Loewner}
 \end{equation}
 where equality holds for a metric $\G$ realizing a flat hexagonal torus. 
\end{theorem}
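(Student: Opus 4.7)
The plan is to prove inequality \eqref{Loewner} by the classical route: uniformization, Fubini combined with Cauchy--Schwarz, and the value of the Hermite constant in dimension two. First I would apply the uniformization theorem to write $\G = f^{2}\ts\G_{0}$ where $\G_{0}$ is a flat metric on $\T^{2}$ and $f$ is a smooth positive function. After rescaling $\G_{0}$ by a positive constant I may arrange $\vol_{\G_{0}}(\T^{2}) = \vol_{\G}(\T^{2}) = A$, and identify $(\T^{2}, \G_{0})$ with the quotient $\R^{2}/\Lambda$ for a rank-two lattice $\Lambda \subset \R^{2}$ of covolume $A$.

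Next, let $v \in \Lambda$ be a nonzero vector of minimal Euclidean length $L = |v|$. The flat geodesics in the direction of $v$ foliate $\T^{2}$ by a one-parameter family of noncontractible loops $\gamma_{s}$, each of $\G_{0}$-length $L$, where $s$ ranges over a transverse interval of length $A/L$. Since each $\gamma_{s}$ is noncontractible,
\begin{equation*}
 \sys\pi_{1}(\T^{2}, \G) \;\leqslant\; \length_{\G}(\gamma_{s}) \;=\; \int_{0}^{L} f(\gamma_{s}(t))\,dt .
\end{equation*}
Integrating in $s$ over the transverse interval and applying the Cauchy--Schwarz inequality,
\begin{equation*}
 \tfrac{A}{L}\,\sys\pi_{1}(\T^{2}, \G) \;\leqslant\; \int_{\T^{2}} f\, d\vol_{\G_{0}} \;\leqslant\; \sqrt{A}\,\Bigl(\textstyle\int_{\T^{2}} f^{2}\, d\vol_{\G_{0}}\Bigr)^{\!1/2} = \sqrt{A}\cdot\sqrt{A} = A ,
\end{equation*}
where I have used that $\int_{\T^{2}} f^{2}\, d\vol_{\G_{0}} = \vol_{\G}(\T^{2}) = A$. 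This yields the intermediate bound $\sys\pi_{1}(\T^{2}, \G) \leqslant L$.

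To finish, I would invoke the sharp two-dimensional Hermite inequality: for any rank-two lattice $\Lambda \subset \R^{2}$ of covolume $A$, its shortest nonzero vector satisfies $L^{2} \leqslant \tfrac{2}{\sqrt{3}}\, A$, with equality if and only if $\Lambda$ is similar to the hexagonal lattice generated by two unit vectors at angle $\pi/3$. Combining the two estimates gives $\sys\pi_{1}(\T^{2}, \G)^{2} \leqslant L^{2} \leqslant \tfrac{2}{\sqrt{3}}\, \vol_{\G}(\T^{2})$, and equality forces both Cauchy--Schwarz equality (so $f$ is constant and $\G$ is flat) and the hexagonal lattice shape, which recovers the extremal flat hexagonal torus. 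The main obstacle is the sharp lattice-geometric input rather than the analytic step; the standard way to handle it is to reduce $\Lambda$ to a basis $\{v, w\}$ with $|v| = L$ minimal and $|w|$ minimal among vectors independent of $v$, use $|w \pm v| \geqslant |w|$ to force the angle between $v$ and $w$ to lie in $[\pi/3, \pi/2]$, and conclude that the fundamental parallelogram has area at least $\tfrac{\sqrt{3}}{2}\ts L^{2}$.
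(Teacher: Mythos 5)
Your proposal is correct: it is the classical proof of Loewner's torus inequality via uniformization, the Fubini--Cauchy--Schwarz averaging argument, and the two-dimensional Hermite constant $\gamma_{2} = 2/\sqrt{3}$. Note, however, that the paper does not itself prove this statement; it is quoted as a known theorem with references to Pu and to Katz's monograph, so there is no proof in the paper to compare against. Your argument is complete and sound: the reduction to $\sys\pi_{1}(\T^{2},\G) \leqslant L$ via the foliation by parallel closed geodesics and Cauchy--Schwarz is exactly right, and the lattice-reduction argument you sketch (choosing a Minkowski-reduced basis, using $|w \pm v| \geqslant |w|$ to pin the angle to $[\pi/3, \pi/2]$, hence covolume at least $\tfrac{\sqrt{3}}{2}L^{2}$) correctly supplies the sharp Hermite bound and identifies the hexagonal lattice as the equality case. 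One small point worth making explicit in a polished write-up: after rescaling $\G_{0}$ so that $\vol_{\G_{0}} = \vol_{\G}$, the normalization $\int_{\T^{2}} f^{2}\, d\vol_{\G_{0}} = A$ is automatic, but equality in Cauchy--Schwarz only gives $f$ constant almost everywhere, and then $f \equiv 1$ follows from that same normalization; this is what makes the extremal metric literally the flat hexagonal one rather than merely conformal to it.
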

After Loewner, Pu \cite{Pu52} proved another inequality for homotopy $1$-systole on real projective plane $\RP^{2}$. 
\begin{theorem}[Pu \cite{Pu52}]
 For every Riemannian metric $\G$ defined on a real projective plane $\RP^{2}$,
 \begin{equation}
  \sys \pi_{1}(\RP^{2}, \G)^{2} \leqslant \frac{\pi}{2} \ts \vol_{\G}(\RP^{2}),
  \label{Pu}
 \end{equation}
 where equality holds for metrics $\G$ with constant curvature.
\end{theorem}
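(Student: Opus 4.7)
The plan is to use the standard conformal averaging technique on the universal cover $S^2$. Let $g_0$ denote the round metric on $S^2$ of total area $4\pi$. Given a metric $\G$ on $\RP^2$, I lift to an antipodally invariant metric $\tilde{\G}$ on $S^2$, and uniformization in dimension two furnishes a positive function $f$ on $S^2$ with $\tilde{\G} = f \cdot g_0$. Since both $\tilde{\G}$ and $g_0$ are antipodally invariant, so is $f$. Conformal rescaling in dimension two then yields the basic identity $\int_{S^2} f \, dA_{g_0} = \vol_{\tilde{\G}}(S^2) = 2 \vol_\G(\RP^2)$.

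Each great semicircle of $(S^2, g_0)$ joining an antipodal pair descends under the two-fold cover to a noncontractible loop in $\RP^2$. Its length in $\G$ is $\int_0^\pi \sqrt{f(\gamma(t))} \, dt$, so the Cauchy--Schwarz inequality gives
\begin{equation*}
\sys \pi_1(\RP^2, \G)^2 \;\leqslant\; \pi \int_0^\pi f(\gamma(t)) \, dt
\end{equation*}
for \emph{every} great semicircle $\gamma : [0, \pi] \to S^2$.

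The heart of the argument, and the chief technical obstacle, is to average this inequality over the space of oriented great semicircles, parameterized by the unit tangent bundle $T^1 S^2$ equipped with its $SO(3)$-invariant probability measure. The identity to establish is
\begin{equation*}
\mathrm{avg}_{\gamma} \Bigl( \int_0^\pi f(\gamma(t)) \, dt \Bigr) \;=\; \frac{1}{2} \vol_\G(\RP^2),
\end{equation*}
which follows because for each fixed $t$ the evaluation map $(x, v) \mapsto \gamma_{x,v}(t)$ is $SO(3)$-equivariant and hence pushes the probability measure on $T^1 S^2$ forward to normalized area on $S^2$. An application of Fubini together with the identity from the first paragraph completes the computation. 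Combining with the Cauchy--Schwarz bound yields $\sys \pi_1(\RP^2, \G)^2 \leqslant \frac{\pi}{2} \vol_\G(\RP^2)$.

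Equality in the final estimate requires Cauchy--Schwarz to be sharp on every great semicircle simultaneously, which forces $f$ to be constant on every great circle and hence constant on $S^2$. This delivers the equality clause for metrics of constant Gaussian curvature.
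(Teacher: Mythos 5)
The paper does not prove Pu's inequality; it simply quotes the result with a citation to Pu \cite{Pu52}, so there is no in-paper argument to compare against. What you have written is the standard conformal-averaging proof, and the main computation is sound: the Cauchy--Schwarz bound on each great semicircle, the $SO(3)$-equivariance of the evaluation map pushing the Liouville probability measure on $T^1S^2$ forward to normalized area, the Fubini step, and the equality analysis all check out, and the constants combine correctly to give $\pi/2$.

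One step is stated too loosely. Uniformization does not directly furnish a function $f$ with $\tilde{\G} = f g_0$; it furnishes a diffeomorphism $\phi : S^2 \to S^2$ with $\phi^*\tilde{\G} = f g_0$. To conclude that $f$ is antipodally invariant you must first arrange that $\phi$ intertwines the round antipodal map with the deck transformation of the cover $S^2 \to \RP^2$. This is possible, but it rests on the nontrivial fact that every fixed-point-free conformal involution of $(S^2, g_0)$ is conjugate by a M\"obius transformation to the standard antipodal map (equivalently, that $\RP^2$ has a unique conformal structure). Without invoking this, the sentence ``Since both $\tilde{\G}$ and $g_0$ are antipodally invariant, so is $f$'' is a non sequitur. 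Once this adjustment is made, the rest of your argument goes through; pulling back by $\phi$ changes neither the volume nor the systole, so it is harmless.
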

Define the optimal systolic ratio $\SR(M)$ of an $n$-dimensional manifold $M$ to be
\begin{equation*}
 \inf_{\G} \tts \frac{\vol_{\G}(M)}{\sys \pi_{1}(M, \G)^{n}},
\end{equation*}
where the infimum is taken over all Riemannian metrics $\G$ on $M$. Loewner inequality (\ref{Loewner}) implies that $\SR(\T^{2}) = \frac{\sqrt{3}}{2}$, and $\SR(\RP^{2}) = \frac{2}{\pi}$ by Pu inequality (\ref{Pu}). Moreover, Bavard \cite{Ba86} proved that the optimal systolic ratio of Klein bottle $\RP^{2} \# \RP^{2}$ is equal to $\frac{2\sqrt{2}}{\pi}$, which is realized by a singular metric. Based on Pu inequality and Gromov's inequality \cite[Corollary 5.2.B.]{Gr83}, Croke and Katz \cite{CroKa03} summarized that
\begin{equation*}
 \SR(\Sigma) \geqslant \frac{2}{\pi}
\end{equation*} 
for any closed surface $\Sigma$ other than the $2$-sphere $S^{2}$. For a closed surfaces $\Sigma$ other than $S^{2}, \T^{2}, \RP^{2}, \RP^{2} \# \RP^{2}$, the optimal systolic ratio is unknown.

A topological space $K$ is aspherical if all higher homotopy groups $\pi_{i}(K)$ vanish, with $i \geqslant 2$. An $n$-dimensional manifold $M$ is essential if there exists a map $f: M \to K$ to an aspherical topological space $K$ such that $f_{*}([M]) \neq 0$ with the fundamental class $[M] \in H_{n}(M; \Z)$ when $M$ is orientable, and with the fundamental class $[M] \in H_{n}(M; \Z_{2})$ when $M$ is nonorientable. Gromov \cite{Gr83} showed the existence of systolic inequality for homotopy $1$-systole of essential manifolds.
\begin{theorem}[Gromov \cite{Gr83}]
 Let $M$ be an essential $n$-dimensional manifold. For any Riemannian metric $\G$ defined on $M$,
 \begin{equation*}
  \sys \pi_{1}(M, \G)^{n} \leqslant C(n) \tts \vol_{\G}(M),
 \end{equation*}
 where $C(n) = \left( \tts 6 (n+1) \cdot n^{n} \cdot \sqrt{(n+1)!} \ts \right)^{n}$.
\end{theorem}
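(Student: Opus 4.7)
The statement to prove is Gromov's foundational inequality bounding the homotopy $1$-systole of an essential $n$-manifold in terms of its volume. My plan follows Gromov's original two-step strategy, routed through the \emph{filling radius}.

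First, I would set up the Kuratowski embedding: given $(M, \G)$, define $\iota: M \to L^{\infty}(M)$ by $\iota(x) = d_{\G}(x, \cdot)$, which is an isometric embedding into a Banach space. The filling radius $\mathrm{FillRad}(M)$ is then defined as the infimum of $\varepsilon > 0$ such that the fundamental class $[M]$ (with $\Z$ or $\Z_{2}$ coefficients, matching the orientability of $M$) becomes null in the singular homology of the $\varepsilon$-neighborhood $U_{\varepsilon}(\iota(M)) \subset L^{\infty}(M)$, i.e.\ there is a chain in $U_{\varepsilon}$ whose boundary pushes forward to $[M]$.

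The second step is to establish the \textbf{systole--filling inequality}
\[
 \sys \pi_{1}(M, \G) \;\leqslant\; 6 \tts \mathrm{FillRad}(M),
\]
valid for any essential manifold $M$. The argument goes by contradiction: assume the systole exceeds $6 \tts \mathrm{FillRad}$, fix a filling chain $C$ in some $\varepsilon$-neighborhood with $\varepsilon < \sys \pi_{1}/6$, and use the largeness of $\sys \pi_{1}$ to contract, simplex by simplex, the image of $C$ into $M$ itself (each simplex has diameter bounded by a small multiple of $\varepsilon$, so loops in it are null-homotopic in $M$). This produces a nullhomology of $[M]$ inside $M$, contradicting essentiality via the classifying map $f: M \to K$ with $f_{*}[M] \neq 0$.

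The hard step is the \textbf{filling radius--volume inequality}
\[
 \mathrm{FillRad}(M) \;\leqslant\; C'(n) \tts \vol_{\G}(M)^{1/n},
\]
with $C'(n)$ tracking down to Gromov's explicit constant. My plan here is to follow Gromov's inductive construction of a filling by a cone-like chain. One fixes a small scale, considers the family of distance spheres $\partial B(x, r)$ for $x \in M$ and varying $r$, and produces, via a Morse-theoretic / generic-slicing argument, a system of $(n-1)$-chains whose total mass is controlled by $\vol_{\G}(M)$. Extending these to $n$-chains in $L^{\infty}$ and iterating the construction in lower dimensions yields a filling of radius $\lesssim \vol_{\G}(M)^{1/n}$. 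The key technical obstacle--and the step I expect to be the crux--is controlling the constant quantitatively: one must feed a careful induction on dimension through an isoperimetric-type inequality for \emph{relative} fillings in $L^{\infty}$, and bookkeeping the factors yields precisely $C'(n) = 6(n+1) n^{n} \sqrt{(n+1)!}/6$, so that composing with the factor $6$ from step two gives the stated $C(n) = (6(n+1) n^{n} \sqrt{(n+1)!})^{n}$.

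Combining the two inequalities gives $\sys \pi_{1}(M, \G)^{n} \leqslant 6^{n} \tts \mathrm{FillRad}(M)^{n} \leqslant C(n) \tts \vol_{\G}(M)$, as desired. The systole--filling step is essentially formal once one has the filling radius framework; all the genuine geometric difficulty is concentrated in the inductive filling construction and the explicit control of its constant.
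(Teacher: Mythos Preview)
The paper does not prove this theorem at all: it is stated in the survey Section~2 as a background result attributed to Gromov~\cite{Gr83}, with no proof given. There is therefore nothing in the paper to compare your proposal against. Your sketch is a reasonable outline of Gromov's original filling-radius argument, but for the purposes of this paper the theorem is simply quoted from the literature.
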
   

The $\Z$-coefficient (or $\Z_{2}$-coefficient) homology $k$-systole of a Riemannian $n$-dimensional manifold $(M, \G)$ is defined to be the infimum of volumes of all cycles representing nontrivial $k$-classes of $H_{k}(M; \Z)$ (or $H_{k}(M; \Z_{2})$), denoted $\sys H_{k}(M, \G; \Z)$ (or $\sys H_{k}(M, \G; \Z_{2})$). For $\Z$-coefficient (or $\Z_{2}$-coefficient) homology $k$-systoles, there are phenomena of systolic freedom, see Section 1 of this paper and references therein. The norm of a homology class $\alpha \in H_{k}(M; \Z)$, denoted $\| \alpha \|$, is defined to be
\begin{equation*}
 \inf_{c} \vol_{\G}(c),
\end{equation*} 
where the infimum is taken over all cycles $c$ representing $\alpha$. The stable norm of $\alpha \in H_{k}(M; \Z)$, denoted $\| \alpha \|_{s}$, is defined to be
\begin{equation*}
 \lim_{i \to \infty} \frac{\| i \alpha \|}{i}.
\end{equation*}
\begin{definition}
 The stable $k$-systole of $(M, \G)$, denoted $\stsys_{k}(M, \G)$, is defined to be
 \begin{equation*}
  \inf_{\alpha} \|\alpha\|_{s},
 \end{equation*}
 where the infimum is taken over all nontrivial homology classes $\alpha$ in $H_{k}(M; \Z)$.
\end{definition}
\begin{theorem}[Gromov \cite{Gr83}]
 Let $M$ be a connected and closed orientable $n$-dimensional manifold. Let $(k_{1}, k_{2}, \cdots , k_{p})$ be a partition of $n$, i.e. an unordered sequence of positive integers such that $n = \sum_{i = 1}^{p} k_{i}$. If there are cohomology classes $\beta_{i} \in H^{k_{i}}(M; \R)$ with nonzero cup product $\beta_{1} \cup \beta_{2} \cup \cdots \cup \beta_{p} \in H^{n}(M; \R)$, then for every Riemannian metric $\G$ on $M$,
\begin{equation*}
 \prod_{i = 1}^{p} \stsys_{i}(M, \G) \leqslant C \tts \vol_{\G}(M),
\end{equation*} 
 where $C$ is a positive constant which only depends on the dimension $n$, the partition $(k_{1}, k_{2}, \cdots , k_{p})$ and the Betti numbers $b_{k_{i}}(M)$.
\end{theorem}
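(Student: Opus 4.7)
The plan is to combine the pointwise bound for wedge products of differential forms, Federer's duality between the stable norm on $H_k(M; \R)$ and the comass norm on $H^k(M; \R)$, and Mahler's successive-minima inequality for dual lattices, in order to convert cup-product information into a bound on the product of stable systoles.

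First, the de Rham estimate. For closed forms $\omega_i \in \Omega^{k_i}(M)$ with $\sum k_i = n$, the elementary pointwise inequality
\begin{equation*}
\|\omega_1 \wedge \cdots \wedge \omega_p(x)\|_* \leq c(n; k_1, \ldots, k_p) \prod_{i=1}^{p} \|\omega_i(x)\|_*
\end{equation*}
holds for the comass $\|\cdot\|_*$ on the exterior algebra at $x$. Choosing $\omega_i$ to be a closed representative of a class $\gamma_i \in H^{k_i}(M; \R)$ whose pointwise comass is close to $\|\gamma_i\|_\infty$, and using the de Rham identification of cup product with wedge product, integration yields
\begin{equation*}
|\langle \gamma_1 \cup \cdots \cup \gamma_p, [M]\rangle| \leq c \prod_i \|\gamma_i\|_\infty \cdot \vol_\G(M).
\end{equation*}

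Second, the lattice and duality step. By Federer's theorem the comass norm on $H^{k_i}(M; \R)$ is the norm dual to the stable norm on $H_{k_i}(M; \R)$, so the integer cohomology lattice $L_i^\vee := H^{k_i}(M; \Z)/\mathrm{torsion}$ is dual, in the sense of Mahler, to the integer homology lattice $L_i := H_{k_i}(M; \Z)/\mathrm{torsion}$. Mahler's inequality on successive minima in a dual pair of lattices of rank $b_{k_i}$ gives
\begin{equation*}
 \lambda_{b_{k_i}}(L_i^\vee, \|\cdot\|_*) \leq \frac{C(b_{k_i})}{\lambda_1(L_i, \|\cdot\|_s)} = \frac{C(b_{k_i})}{\stsys_{k_i}(M, \G)},
\end{equation*}
so $L_i^\vee$ admits an integer basis $\{\gamma_i^{(j)}\}_{j=1}^{b_{k_i}}$ in which every element has comass at most $C(b_{k_i})/\stsys_{k_i}(M, \G)$. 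Expanding each real class $\beta_i$ in this basis, the nonvanishing of $\beta_1 \cup \cdots \cup \beta_p$ on $[M]$ forces at least one tuple $(j_1, \ldots, j_p)$ for which $\langle \gamma_1^{(j_1)} \cup \cdots \cup \gamma_p^{(j_p)}, [M]\rangle$ is a nonzero integer, hence of absolute value at least $1$. Applying step one to this tuple yields $1 \leq c \prod_i (C(b_{k_i})/\stsys_{k_i}(M, \G)) \cdot \vol_\G(M)$, which rearranges to the desired inequality.

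The main obstacle is the correct deployment of Mahler's dual-lattice inequality in the Banach-space setting of the comass and stable norms: the classical statement applies to Euclidean lattices, so one must either pass through John-ellipsoid approximations of the unit balls (losing a constant absorbed into $C(b_{k_i})$) or invoke a normed-space version of the successive-minima bound. Either route produces a constant depending on the lattice ranks, which explains the appearance of the Betti numbers $b_{k_i}$ in the final constant $C$.
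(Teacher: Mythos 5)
This statement is quoted in the paper as background and attributed to Gromov \cite{Gr83}; the paper itself contains no proof, so there is nothing internal to compare against. Your argument is exactly the standard (and essentially Gromov's original) proof, as presented for instance in \cite{Gr83} and in Bangert--Katz \cite{BanKa03}: Federer's duality between the stable norm and the comass, the pointwise comass bound on wedge products integrated against $\vol_{\G}(M)$, and a Mahler-type transference theorem producing integral classes of controlled comass whose cup product pairs integrally, hence with absolute value at least $1$, against $[M]$. The reasoning is sound; the only points worth stating more carefully are that you need only $b_{k_i}$ linearly independent integral classes of small comass (which the successive minima of the dual lattice furnish directly --- no lattice \emph{basis} is required), and that Mahler's transference theorem holds for arbitrary symmetric convex bodies, so the John-ellipsoid detour you mention is optional rather than necessary.
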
 
More results of stable systoles can be found in Brunnbauer \cite{Bru08}, Bangert and Katz \cite{BanKa03}, Hebda \cite{Heb86}. 

Expository of systolic geometry and topology include Berger \cite[7.2]{Ber03}, Croke and Katz \cite{CroKa03}, Gromov \cite{Gr92}, Katz \cite{Ka07}, Guth \cite{Gu10}, Chen and Li \cite{CheLi14}. For recent progress, we can see Katz et al \cite{Katzetal11}, Katz and Sabourau \cite{KaSa12, KaSa12b}, Belolipetsky \cite{Bel13}, Belolipetsky and Thomson \cite{BeTho11}, Nakamura \cite{Na13}, Lakeland and Leininger \cite{LaLei14}, {\'A}lvarez Paiva and Balacheff \cite{PaiBa14}.

\section{$3$-manifolds with surface bundle structure}

In this section, we introduce some preliminary knowledge of 3-manifolds with surface bundle structure, i.e., 3-manifolds which fiber over the cicrcle. The class of 3-manifolds with surface bundle structure plays an important role in the theory of 3-manifolds, see \cite{Ag13, Thu98}. Thurston's theory establishes a geometric classification for $3$-manifolds with surface bundle structure. The $3$-manifold which fibers over circle has a mapping torus structure. The classification theorem of Thurston is based on the classification of the monodromy of mapping torus. In order to explain Thurston's theorem, we introduce the mapping class group of surfaces. The Dehn-Lickorish theorem implies that the mapping class group of surfaces is generated by Dehn twists, which is a fundamental theorem in mapping class group theory. Finally we introduce Dehn surgery of $3$-manifolds, which is used in the construction of metrics exhibiting systolic freedom, see Section 4 and Section 6.

Let $S$ be a connected and closed orientable surface. In this paper, we use the convention that a surface is closed if it is compact and without boundary. We use $\text{Diff}^{+}(S)$ to denote the group of orientation preserving diffeomorphisms. And $\text{Diff}_{0}(S)$ stands for the subgroup of $\text{Diff}^{+}(S)$ which consists of elements isotopic to the identity. 
\begin{definition}
 The mapping class group of $S$, denoted $\Mod(S),$ is defined to be the group of isotopy classes of orientation preserving diffeomorphisms of $S$:
 \begin{equation*}
  \Mod(S) = \text{Diff}^{+}(S) / \text{Diff}_{0}(S).
 \end{equation*}
\end{definition}
An element in $\Mod(S)$ is called a mapping class. The Dehn-Lickorish theorem shows that the mapping class group of an oriented surface is generated by Dehn twists. Let $A$ be the annulus $S^{1} \times [0, 1],$ which is represented by
\begin{equation*}
 \{(e^{i\,\theta}, \, t)\left| 0 \leqslant \theta \leqslant 2 \pi, \, 0 \leqslant t \leqslant 1 \right.\}.
\end{equation*}
Or equivalently, we represent $A$ by the set 
\[ \{(\theta, \, t) \left| 0 \leqslant \theta \leqslant 2 \pi, \, 0 \leqslant t \leqslant 1 \right. \}. \]
A twisting map $\psi : A \to A$ is defined as
\begin{equation*}
 (\theta, \, t) \mapsto (\theta + 2\pi t, \, t).
\end{equation*}
Let $\gamma$ be a simple loop in $S$, with a regular neighborhood $\mathscr{C}(\gamma) \subset S$. There exists an orientation preserving diffeomorphism $H : \mathscr{C}(\gamma) \to A.$ 
\begin{definition}
 The Dehn twist about $\gamma$, denoted $D_{\gamma}$, is defined to be the diffeomorphism $H^{-1} \circ \psi \circ H : \mathscr{C}(\gamma) \to \mathscr{C}(\gamma).$
\end{definition}
In the convention, we call $D_{\gamma}$ a positive Dehn twist, sometimes denoted $D_{\gamma}^{+}$, which is a left twist. And $D_{\gamma}^{-1}$ is called a negative Dehn twist, denoted $D_{\gamma}^{-}$.
Let $\Sigma_{g}$ be a closed orientable surface with genus $g.$ The following theorem is called Dehn-Lickorish theorem.
\begin{theorem}[Farb and Margalit {\cite[Theorem 4.1]{FaMa12}} ]
 The mapping class group $\Mod(\Sigma_{g})$ is generated by finitely many Dehn twists along nonseparating simple loops in $\Sigma_{g}.$
 \label{Dehn-Lickorish}
\end{theorem}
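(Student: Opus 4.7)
The plan is to prove Theorem \ref{Dehn-Lickorish} by induction on the genus $g$, using a transitivity argument that is the standard route to Lickorish's result (and Humphries' refinement). Let $H \leqslant \Mod(\Sigma_{g})$ denote the subgroup generated by a suitable finite collection of Dehn twists about nonseparating simple loops (for example, the Lickorish or Humphries generators); the goal is to show $H = \Mod(\Sigma_{g})$.

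First I would establish a \emph{transitivity lemma}: the subgroup $H$ acts transitively on the set of isotopy classes of nonseparating simple closed curves in $\Sigma_{g}$. The core computation is that if $\gamma$ and $\delta$ are nonseparating simple closed curves with geometric intersection number $1$, then $D_{\gamma} D_{\delta}(\gamma)$ is isotopic to $\delta$, so that a single composition of two Dehn twists swaps $\gamma$ and $\delta$ up to isotopy. To promote this to full transitivity, I would connect any two nonseparating simple closed curves $\alpha$ and $\beta$ by a finite chain $\alpha = \alpha_{0}, \alpha_{1}, \ldots, \alpha_{k} = \beta$ in which consecutive curves intersect once; the existence of such a chain follows from the classification of nonseparating curves via the change-of-coordinates principle (all such curves have homeomorphic complements, hence lie in a common orbit of the full mapping class group). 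The generating curves are chosen so that this chain can be taken inside $H$.

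Next, using transitivity, for any $\phi \in \Mod(\Sigma_{g})$ I would fix once and for all a nonseparating simple closed curve $\alpha \subset \Sigma_{g}$ and choose $h \in H$ with $h(\phi(\alpha)) = \alpha$ (up to isotopy). Thus $h \phi$ lies in the stabilizer $\mathrm{Stab}(\alpha) \leqslant \Mod(\Sigma_{g})$. To finish it suffices to show $\mathrm{Stab}(\alpha) \subseteq H$. Cutting $\Sigma_{g}$ along $\alpha$ produces a surface $\Sigma_{g-1,2}$ of genus $g-1$ with two boundary components, and there is a natural homomorphism
\begin{equation*}
 \mathrm{Stab}(\alpha) \longrightarrow \Mod(\Sigma_{g-1,2}) \rtimes \Z/2 ,
\end{equation*}
where the $\Z/2$ records whether the two sides of $\alpha$ are swapped. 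The kernel is generated by the Dehn twist $D_{\alpha}$, which already lies in $H$ after replacing the generating set by one containing $\alpha$. By induction on $g$ applied to the cut surface (with care for the boundary components, handled via the capping / Birman exact sequences), every element of $\Mod(\Sigma_{g-1,2})$ is a product of Dehn twists about nonseparating simple closed curves in $\Sigma_{g-1,2}$, and each such curve remains nonseparating in $\Sigma_{g}$; transitivity then realizes each such twist inside $H$.

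The main obstacle is the inductive step, specifically verifying that the Dehn twists needed to realize elements of $\mathrm{Stab}(\alpha)$ can all be taken to be about curves that are nonseparating \emph{in $\Sigma_{g}$}, and then identifying those Dehn twists with elements of $H$. This requires a careful bookkeeping of how curves in $\Sigma_{g-1,2}$ sit inside $\Sigma_{g}$ and the use of the transitivity lemma to conjugate arbitrary nonseparating Dehn twists into the fixed finite generating set; the base case $g = 1$ (where $\Mod(\T^{2}) \cong \mathrm{SL}(2,\Z)$ is generated by two Dehn twists) is then a direct verification.
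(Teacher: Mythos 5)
The paper does not prove this statement; it is quoted verbatim as Theorem~4.1 of Farb--Margalit and used as a black box, so there is no in-paper argument to compare against. Your sketch is the standard Lickorish/Farb--Margalit route to that theorem: the relation $D_{\gamma}D_{\delta}(\gamma)\simeq\delta$ when $i(\gamma,\delta)=1$, chain-connectivity of nonseparating curves, reduction to the stabilizer of a fixed nonseparating curve $\alpha$, and genus induction by cutting along $\alpha$, with the Birman/capping sequences handling the boundary bookkeeping. You also correctly flag where the real work lies. Two small imprecisions are worth noting. First, the assertion that the \emph{finitely generated} subgroup $H$ acts transitively on nonseparating curves is the crux of Lickorish's contribution and does not follow from the change-of-coordinates principle, which only gives transitivity of the full group $\Mod(\Sigma_g)$; one either first proves (by your induction) that $\Mod(\Sigma_g)$ is generated by all nonseparating twists and then collapses to finitely many by the conjugation relation $\phi D_{\gamma}\phi^{-1}=D_{\phi(\gamma)}$, or one gives Lickorish's explicit combinatorial argument that a chain of Humphries curves reaches every nonseparating isotopy class. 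Second, the map $\mathrm{Stab}(\alpha)\to\Mod(\Sigma_{g-1,2})\rtimes\Z/2$ is stated in the wrong direction and with the wrong finite part: the natural homomorphism goes from the mapping class group of the cut surface (boundary fixed pointwise) \emph{into} $\mathrm{Stab}(\alpha)$, with kernel generated by the difference of the two boundary twists, and the quotient symmetry is a $(\Z/2)^2$ coming from reversing the orientation of $\alpha$ and from swapping its two sides. Neither issue is fatal; both are resolved exactly as in Farb--Margalit, but the phrasing should be corrected before this could be called a proof rather than an outline.
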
 
\begin{remark}
 Proved by Humphries, the number of nonseparating simple loops in Dehn-Lickorish theorem can be taken as $2g + 1,$ see \cite{FaMa12}.
\end{remark}
\begin{remark}
 On a closed orientable hyperbolic surface $(\Sigma_g, \G)$, Luo proved that the number of Dehn twists in Dehn-Lickorish theorem is computable, see \cite{FrMeLu02}. 
 \begin{theorem}[Freedman, Meyer and Luo {\cite[Appendix, Theorem 12.9]{FrMeLu02}} ]
  There exists a computable constant $C(g, r)$ so that each isometry of $\Sigma_g$ is isotopic to a composition of positive and negative Dehn twists $D_{c_1}^{\pm 1} \circ D_{c_2}^{\pm 1} \circ \cdots \circ D_{c_k}^{\pm 1}$, where $k \leqslant C(g, r)$, and $r$ is the injectivity radius of $\Sigma_g$. Moreover, for each $i$, $\length_{\G}(c_i) \leqslant C(g, r)$.
 \label{Dehn-Lickorish-Luo}
 \end{theorem}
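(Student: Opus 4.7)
The plan is to combine the finiteness of the isometry group of a closed hyperbolic surface, Humphries' refinement of Dehn-Lickorish giving a bounded generating set for $\Mod(\Sigma_g)$, and Mumford's compactness theorem, which allows one to control the $\G$-lengths of specific marking curves uniformly on the $r$-thick part of moduli space.

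For the combinatorial bound on $k$, note that by Hurwitz's theorem $|\text{Isom}(\Sigma_g, \G)| \leqslant 84(g-1)$ for $g \geqslant 2$, and each isometry represents a finite-order element of $\Mod(\Sigma_g)$. There are only finitely many conjugacy classes of finite-order elements in $\Mod(\Sigma_g)$, a standard consequence of Nielsen realization together with the finiteness of isomorphism classes of finite group actions on $\Sigma_g$ up to equivariant homeomorphism. By Humphries' theorem each such conjugacy class is represented by a word of length at most some purely topological constant $K(g)$ in the Humphries generators $D_{\gamma_1}, \ldots, D_{\gamma_{2g+1}}$. Hence one may take $k \leqslant K(g)$ and each $c_i$ in $\{\gamma_1, \ldots, \gamma_{2g+1}\}$.

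For the geometric bound on $\length_{\G}(c_i)$, it suffices to realize the Humphries configuration $\{\gamma_1, \ldots, \gamma_{2g+1}\}$ on $(\Sigma_g, \G)$ by closed geodesics of uniformly bounded length. By Mumford's compactness theorem, the subset of moduli space consisting of hyperbolic surfaces of genus $g$ with injectivity radius at least $r$ is compact. On this compact set, the minimum over all topologically admissible Humphries markings of the longest constituent geodesic is a continuous function, hence bounded above by some $L(g, r)$. The dependence on $r$ is essential here: in the thin part of moduli space, the Margulis collar lemma forces curves dual to a short core geodesic to be arbitrarily long, so no uniform bound in $g$ alone is possible. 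Taking $C(g, r) = \max(K(g), L(g, r))$ then yields the theorem.

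The main obstacle, and the reason the result is attributed to Freedman, Meyer and Luo rather than being a routine combination, is \emph{effectivity}: the argument above shows the \emph{existence} of $K(g)$ and $L(g, r)$, but the statement promises that $C(g, r)$ is computable. Luo's appendix accomplishes this by explicit Teichm\"uller-geometric estimates that produce $L(g, r)$ in closed form, and by counting torsion conjugacy classes via explicit finite presentations of $\Mod(\Sigma_g)$ to produce $K(g)$. Turning the compactness and finiteness inputs into effective algorithms is the genuine content of the theorem.
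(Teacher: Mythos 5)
The paper does not actually prove this statement; it cites it verbatim from Freedman, Meyer and Luo. So there is no proof in the source to compare against, and the proposal must be judged on its own. Read that way, it contains a genuine gap at the step joining the combinatorial bound to the geometric one.

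The problem is the passage from ``each conjugacy class of finite-order mapping classes has a representative that is a word of length $\leqslant K(g)$ in the fixed Humphries generators $D_{\gamma_1},\ldots,D_{\gamma_{2g+1}}$'' to ``hence one may take $k\leqslant K(g)$ and each $c_i\in\{\gamma_1,\ldots,\gamma_{2g+1}\}$.'' If $\phi$ is conjugate to such a representative $\phi_0$, say $\phi = h\phi_0 h^{-1}$, then from $\phi_0 = D_{\gamma_{i_1}}^{\pm1}\cdots D_{\gamma_{i_m}}^{\pm1}$ one obtains $\phi = D_{h(\gamma_{i_1})}^{\pm1}\cdots D_{h(\gamma_{i_m})}^{\pm1}$: the number of twists is still $\leqslant K(g)$, but the twist curves become $h(\gamma_{i_j})$, not $\gamma_{i_j}$. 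The conjugating mapping class $h$ is completely unconstrained, so these curves can have arbitrarily large $\G$-length. Your Mumford-compactness observation in the next paragraph does produce a Humphries marking by short geodesics on $(\Sigma_g,\G)$, but it does not arrange that $h$ carries the standard Humphries system to that short system, nor that $\phi$ has bounded word length with respect to the short marking. The two ingredients are individually correct but do not interlock.

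What is missing is precisely where the hypothesis that $\phi$ is an \emph{isometry} (not merely a finite-order mapping class) must be used geometrically, not just to invoke Hurwitz's bound on $|\mathrm{Isom}|$. An isometry permutes the finite set of geodesics of length $\leqslant L(g,r)$ on an $r$-thick surface, so one can look for a $\phi$-compatible short filling system and then convert the combinatorics of how $\phi$ permutes that system into an explicit Dehn-twist word; that is the sort of argument Luo's appendix carries out. Your final paragraph correctly identifies effectivity as a further issue, but the proposal as written already fails at the conjugation step, before effectivity becomes relevant. To repair it you would need a lemma of the form: if two Humphries systems on $(\Sigma_g,\G)$ both consist of geodesics of length $\leqslant L$, then the change-of-marking diffeomorphism is a product of at most $F(g,r,L)$ Dehn twists along curves of length $\leqslant F(g,r,L)$ — and then apply it to $\Gamma$ and $\phi(\Gamma)$ for a short marking $\Gamma$. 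That lemma is the real content, and it is not supplied.
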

 When $r \geqslant \log{2}$, we have $C(g, r) \geqslant g^{g^{g^{\cdots g}}}$ (there are $3g - 3$ exponents).
\end{remark}

The elements of $\Mod(\Sigma_{g})$ are classified into three types: periodic, reducible and pseudo-Anosov, see \cite[13.3]{FaMa12}. A  class $f \in \Mod(\Sigma_{g})$ is periodic if it is of finite order. For a periodic mapping class $f \in \Mod(\Sigma_{g}),$ there exists a representative $\phi \in \text{Diff}^{+}(\Sigma_{g})$ so that $\phi$ is of finite order, see \cite[Theorem 7.1]{FaMa12}. 

Let $M$ be a smooth manifold, and let $\phi : M \to M$ be a diffeomorphism. 
\begin{definition}
 The mapping torus $M_{\phi}$ is defined to be a fiber bundle over the diffeomorphism map $\phi$ with fiber the manifold $M,$ which can be obtained from the cylinder $M \times [0, \, 1]$ by identifying the two ends via the map $\phi.$  
\end{definition} 
A $3$-manifold is called with surface bundle structure if it is a fiber bundle over the circle $S^{1}$ with fiber a closed orientable surface. A $3$-manifold $M$ with surface bundle structure is a mapping torus, i.e.,
\begin{equation*}
 M = \Sigma_{g} \times [0, \, 1] / (x, \, 0) \sim (\phi(x), \, 1),
\end{equation*} 
where $\Sigma_{g}$ is the fiber surface of $M,$ and the mapping class represented by $\phi: \Sigma_{g} \to \Sigma_{g}$ is called the monodromy of $M.$  According to Thurston, the geometric structure of $M$ is dependent on the type of the monodromy $\phi.$ Let $\mathbb{H}^{2}$ be the hyperbolic plane, and $\mathbb{R}$ be the Euclidean line. The following proposition is contained in Thurston's theorem.
\begin{proposition}[Farb and Margalit {\cite[Theorem 13.4]{FaMa12}}]
 Let $M$ be a $3$-manifold with surface bundle structure. Then $M$ has $\mathbb{H}^{2} \times \mathbb{R}$ geometric structure if and only if the monodromy of $M$ is periodic.
 \label{Thurston}
\end{proposition}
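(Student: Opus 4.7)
The strategy is to prove each direction separately: the implication $(\Leftarrow)$ (periodic monodromy gives an $\mathbb{H}^2\times\mathbb{R}$ structure) rests on Nielsen realization, while the implication $(\Rightarrow)$ ($\mathbb{H}^2\times\mathbb{R}$ structure forces periodic monodromy) rests on analyzing cocompact lattices in $\mathrm{Isom}(\mathbb{H}^2\times\mathbb{R})$.

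For the $(\Leftarrow)$ direction, Theorem 7.1 of Farb--Margalit (already cited just above in the excerpt) yields a representative $\phi\in\mathrm{Diff}^{+}(\Sigma_g)$ of finite order $n$ for the periodic monodromy class. Nielsen realization applied to the finite cyclic group $\langle\phi\rangle\leq\Mod(\Sigma_g)$ then produces a hyperbolic metric $h$ on $\Sigma_g$ for which $\phi$ acts as an isometry of order $n$. The product metric $h+dt^2$ on $\Sigma_g\times\mathbb{R}$ realizes the $\mathbb{H}^2\times\mathbb{R}$ geometry, and the map $(x,t)\mapsto(\phi(x),t+1)$ generates a free, properly discontinuous, cocompact action by isometries whose quotient is isometric to the mapping torus $M$, giving the desired geometric structure.

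For the $(\Rightarrow)$ direction, write $M=(\mathbb{H}^2\times\mathbb{R})/\Gamma$ with $\Gamma$ a cocompact lattice. Every isometry of $\mathbb{H}^2\times\mathbb{R}$ preserves the factor decomposition (since the two factors are not isometric), so $\Gamma$ projects homomorphically to $\mathrm{Isom}(\mathbb{R})$, and after passing to an index-two subgroup $\Gamma'$ the image lies in the translations. The kernel of this projection is a cocompact discrete subgroup of $\mathrm{Isom}(\mathbb{H}^2)$, i.e.\ a surface group $\pi_1(\Sigma)$, and $\Gamma'$ splits as a direct product $\pi_1(\Sigma)\times\Z$. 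Thus $M$ has a finite cover diffeomorphic to $\Sigma\times S^1$; pulling back the original fibration $M\to S^1$ produces a bundle structure on this cover whose monodromy is a power $\phi^N$ of the original monodromy up to isotopy, and from the product decomposition this power must be isotopic to the identity, so $\phi$ is periodic. The main obstacle is the final reconciliation of two a priori different surface bundle structures on the finite cover: the one pulled back from $M\to S^1$ and the one arising from $\Sigma\times S^1$. I would invoke the uniqueness up to isotopy of the Seifert fibration on an $\mathbb{H}^2\times\mathbb{R}$-manifold to conclude that the fiber of any bundle $M\to S^1$ must be transverse to the $\mathbb{R}$-direction and hence covered by a horizontal leaf, forcing the monodromy to act on the fiber as an isometry of $\mathbb{H}^2$ of finite order.
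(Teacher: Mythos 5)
The paper does not prove this statement: it is quoted directly from Farb and Margalit \cite[Theorem 13.4]{FaMa12} and used as a known result to produce the locally $\mathbb{H}^2\times\mathbb{R}$ metric $\mathcal{G}_k$ on the mapping torus $M_{g_k}$. So there is no internal argument to compare against, and the question is whether your sketch stands on its own.

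Your $(\Leftarrow)$ direction is the standard argument and is correct; note only that Farb--Margalit Theorem 7.1, which you already invoke, is itself the cyclic case of Nielsen realization and directly hands you a representative that is an isometry of some hyperbolic metric on $\Sigma_g$, so the separate appeal to Nielsen realization is redundant. The $(\Rightarrow)$ direction has two genuine gaps. First, the index-two subgroup $\Gamma'$ need not split as a direct product $\pi_1(\Sigma)\times\Z$: a generator of the $\Z$-quotient has the form $(a,b)\in\mathrm{Isom}(\mathbb{H}^2)\times\R$ where $a$ normalizes the Fuchsian group $\pi_1(\Sigma)$ but need not lie in it, so a priori one only gets a semidirect product; one must pass to a further finite-index subgroup (using that the normalizer of a cocompact Fuchsian group contains it with finite index) to trivialize the twisting. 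Second, and more importantly, no reason is given why the resulting $\Sigma\times S^1$ cover should be a cyclic cover with respect to the \emph{given} fibration $M\to S^1$, so the claim that its pulled-back monodromy is $\phi^N$ is unjustified. You flag this yourself, and the repair you gesture at is essentially the right one, but the ingredient actually needed is Waldhausen's vertical/horizontal dichotomy for incompressible surfaces in Seifert fibered spaces rather than uniqueness of the Seifert fibration per se: a fiber of genus at least two cannot be vertical, hence is isotopic to a horizontal surface covered by a slice $\mathbb{H}^2\times\{t\}$, and the monodromy is then the first-return map of the isometric $\R$-flow to that slice, hence an isometry of a closed hyperbolic surface and so of finite order. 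As written the sketch names the right tools but does not assemble them.
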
  

At last of this section, we introduce Dehn surgery of $3$-manifolds. Let $M$ be a $3$-manifold, with boundary possibly. Let $K$ be a knot in the interior of $M.$ We use $\mathcal{T}(K)$ to denote a tubular neighborhood of $\gamma$ in the interior of $M,$ which is a solid torus. Let $\gamma$ be a loop in the boundary torus $\partial \mathcal{T}(K).$
\begin{definition}
 The Dehn surgery on $M$ with respect to the knot $K$ and the loop $\gamma$ is the following construction:
 \begin{equation*}
  M^{\prime} = \left( M - (\mathcal{T}(K))^{\circ} \right) \cup_{f} \tilde{\mathcal{T}}, 
 \end{equation*}
 where $\tilde{\mathcal{T}}$ is a solid torus, and $f: \partial \tilde{\mathcal{T}} \to \partial \mathcal{T}(K)$ is a homeomorphism on the boundary tori, such that the meridian loop of $\partial \tilde{\mathcal{T}}$ is mapped onto the loop $\gamma$ in $\partial \mathcal{T}(K).$
\end{definition}
It is called the Dehn filling of gluing a solid torus $\tilde{\mathcal{T}}$ to the $3$-manifold $M - (\mathcal{T}(K))^{\circ}$ with torus boundary. We can see \cite{Ro03} for more about Dehn surgery.

\section{$\Z_{2}$-coefficient homology $(1, 2)$-systolic freedom of $S^{2} \times S^{1}$}

Freedman \cite{Fr99} (see also \cite{FrMeLu02}) proved that the $3$-manifold $S^{2} \times S^{1}$  is of $\Z_{2}$-coefficient homology $(1, 2)$-systolic freedom. We introduce Freedman's technique in this section. Another interpretation of Freedman's work can be found in Fetaya \cite{Fe11}.

In Freedman's proof, a sequence of arithmetic hyperbolic surfaces $\{\Sigma_{g_{k}}\}_{k=1}^{\infty}$ is constructed first. In terms of the hyperbolic surface $\Sigma_{g_{k}}$, a Riemannian mapping torus $M_{g_{k}}$ with the metric $\G_{k}$ is constructed, where the Riemannian metric $\G_{k}$ is locally isometric to the standard product metric on $\mathbb{H}^{2} \times \mathbb{R}$. The mapping torus $M_{g_{k}}$ has fiber surface $\Sigma_{g_{k}}$. A new $3$-manifold homeomorphic to $S^{2} \times S^{1}$, denoted $S^{2} \times S^{1}_{g_{k}}$, is obtained by performing a series of Dehn surgeries on $M_{g_{k}}$. After specifying the metric change in Dehn surgeries, we have a Riemannian metric $\widehat{\mathcal{G}}_{k}$ on $S^{2} \times S^{1}_{g_{k}}.$ The metric change in Dehn surgeries is not given in Freedman's papers \cite{Fr99} and \cite{FrMeLu02}. We will describe such a metric change in Section 6 of this article. Based on topological and geometrical properties of $M_{g_{k}}$ as well as the metric change in Dehn surgeries, Freedman estimated the growth of $\sys H_{1}(S^{2} \times S^{1}, \widehat{\G}_{k}; \Z_{2})$ and $\sys H_{2}(S^{2} \times S^{1}, \widehat{\G}_{k}; \Z_{2})$ as well as the volume $\vol_{\widehat{\G}_{k}}(S^{2} \times S^{1})$. These estimations yield that
\begin{equation}
 \inf_{k} \ts \frac{\vol_{\widehat{\G}_{k}}(S^{2} \times S^{1})}{\sys H_{1}(S^{2} \times S^{1}, \widehat{\G}_{k}; \Z_{2}) \cdot \sys H_{2}(S^{2} \times S^{1}, \widehat{\G}_{k}; \Z_{2})} = 0.
 \label{Freedman}
\end{equation} 
Hence the $3$-manifold $S^{2} \times S^{1}$ is of $\Z_{2}$-coefficient homology $(1, 2)$-systolic freedom. We show more details of Freedman's technique in the following. 

Let $p$ be a prime number such that $p \equiv 3 \ts (\text{mod} \, 4).$ Define the group $\Gamma_{(-1, \, p)}$ as
\begin{equation*}
 \left\{ \left. \left( \begin{array}{ll} a + b\sqrt{p} & -c + d\sqrt{p} \\ c + d\sqrt{p} & a - b\sqrt{p} \end{array} \right) \right| a, b, c, d \in \mathbb{Z}, \, \det = 1 \right\} {\bigg /} \pm \text{I}_{2}, 
\end{equation*}
where $\det$ denotes the determinant of $2 \times 2$ matrix, and $I_2$ denotes the $2 \times 2$ identity matrix. The group $\Gamma_{(-1, \, p)}$ is an arithmetic Fuchsian group derived from the quaternion algebra
\begin{displaymath}
 \left( \frac{-1, \, p}{\mathbb{Q}} \right),
\end{displaymath}
see Schmutz Schaller \cite{Scha97}. 
Let $N \geqslant 2$ be a positive integer. Define the $N$-th congruence subgoup of $\Gamma_{(-1, \, p)}$ as
\begin{equation*}
  \left\{ \left. \left( \begin{array}{ll} 1 + N( a + b\sqrt{p} ) & N( -c + d\sqrt{p} ) \\ N( c + d\sqrt{p} ) & 1 + N( a - b\sqrt{p} ) \end{array} \right) \right| a, b, c, d \in \mathbb{Z}, \det = 1 \right\} {\bigg /} \pm \text{I}_{2},
\end{equation*}
denoted $\Gamma_{(-1, \, p)} (N)$. In terms of the congruence subgroup $\Gamma_{(-1, \ts p)}(N),$ we construct an arithmetic Riemann surface $\mathbb{H}^{2} / \Gamma_{(-1, \ts p)}(N)$, which has the following properties.
\begin{proposition}[Schmutz Schaller \cite{Scha97}]
 \begin{enumerate}
  \item The arithmetic Riemann surface $\mathbb{H}^{2}/\Gamma_{(-1, \, p)}(N)$ is hyperbolic.
  \item If the genus of $\mathbb{H}^{2} / \Gamma_{(-1, p)}(N)$ is denoted $\text{genus}\tts(N)$, then
        \begin{equation*}
         A_{p} \tts N^{2} \leqslant \text{genus}\tts(N) \leqslant B_{p} \tts N^{3}, 
        \end{equation*}
        where $A_{p}$ and $B_{p}$ are two positive constants which only depend on $p.$
  \item Let $\mathcal{G}_{\mathbb{H}^{2}}$ be the hyperbolic metric on $\mathbb{H}^{2}/\Gamma_{(-1, \, p)}(N)$. We have
        \begin{equation*}
         \sys \pi_{1}(\mathbb{H}^{2}/\Gamma_{(-1, \, p)}(N), \mathcal{G}_{\mathbb{H}^{2}}) \geqslant C_{1} \, \log{N},
        \end{equation*}
        where $C_{1}$ is a fixed positive constant independent of $N.$        
 \end{enumerate}
\end{proposition}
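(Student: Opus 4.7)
The plan is to handle the three assertions separately, each reducing to a standard fact about arithmetic Fuchsian groups from quaternion algebras.

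For part (1), I would first observe that $\Gamma_{(-1,p)}$ is the image in $PSL_2(\mathbb{R})$ of the norm-one elements of an order in the quaternion algebra $(-1,p/\mathbb{Q})$, which (because $p \equiv 3 \pmod 4$) is a division algebra split at the real place; hence the group is a cocompact Fuchsian group. For $N \geq 2$, to see that $\mathbb{H}^2/\Gamma_{(-1,p)}(N)$ is a smooth hyperbolic surface it suffices to verify that $\Gamma_{(-1,p)}(N)$ is torsion-free. A Minkowski-style congruence argument does this: any element of $\Gamma_{(-1,p)}(N)$ is congruent to $\pm I$ modulo $N$, while a nontrivial torsion element of $SL_2(\mathbb{R})$ has characteristic polynomial $x^2 - tx + 1$ with $|t| < 2$, so its reduction mod $N$ (for $N \geq 2$) cannot equal $\pm I$. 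Thus $\Gamma_{(-1,p)}(N)$ acts freely on $\mathbb{H}^2$.

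For part (2), the genus tracks the index $[\Gamma_{(-1,p)} : \Gamma_{(-1,p)}(N)]$. The defining matrices involve four integer parameters $a,b,c,d$ subject to the single constraint $a^2 - pb^2 + c^2 - pd^2 = 1$, so a reduction-mod-$N$ count gives an upper bound of the form $[\Gamma_{(-1,p)}:\Gamma_{(-1,p)}(N)] \leq B'_p N^3$. Combined with Gauss--Bonnet applied to the covering $\mathbb{H}^2/\Gamma_{(-1,p)}(N) \to \mathbb{H}^2/\Gamma_{(-1,p)}$ (whose base has area a fixed constant $V_p>0$), one obtains $4\pi(\text{genus}(N)-1) = V_p \cdot [\Gamma_{(-1,p)} : \Gamma_{(-1,p)}(N)]$, and hence $\text{genus}(N) \leq B_p N^3$. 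The lower bound $A_p N^2 \leq \text{genus}(N)$ is weaker and would be obtained by exhibiting an explicit two-parameter family of $\sim N^2$ inequivalent cosets of $\Gamma_{(-1,p)}(N)$ inside $\Gamma_{(-1,p)}$.

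For part (3), the logarithmic systole lower bound is the Buser--Sarnak phenomenon. Every closed geodesic on a compact hyperbolic surface is the projection of the axis of a hyperbolic element $\gamma$ of the uniformizing Fuchsian group, and its length equals the translation length $\ell(\gamma) = 2\,\mathrm{arccosh}(|\mathrm{tr}(\gamma)|/2)$. The arithmetic input is that every hyperbolic $\gamma \in \Gamma_{(-1,p)}(N)$ satisfies $\mathrm{tr}(\gamma) \equiv \pm 2 \pmod N$, which is read directly off the matrix form defining the congruence subgroup; since $|\mathrm{tr}(\gamma)| > 2$ for a hyperbolic element, this forces $|\mathrm{tr}(\gamma)| \geq N - 2$. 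Consequently $\ell(\gamma) \geq 2\,\mathrm{arccosh}((N-2)/2) \geq C_1 \log N$ for $N$ sufficiently large, and taking the infimum over all hyperbolic elements of $\Gamma_{(-1,p)}(N)$ yields the stated systole bound.

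The principal obstacle lies in part (3): the required uniform trace estimate depends delicately on the arithmetic of the congruence subgroup, which forces every trace away from the dangerous interval $(-2,2)$, and one must verify that \emph{every} nontrivial element contributes a length bounded below by $C_1\log N$ (not merely generic ones). All three assertions are, however, established in Schmutz Schaller's paper \cite{Scha97}, which I would cite in the actual write-up; the sketch above isolates the key algebraic and geometric ingredients (quaternion algebras, index counts, Gauss--Bonnet, and the trace-length identity) that drive the proof.
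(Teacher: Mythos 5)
This proposition is stated in the paper purely as a citation of Schmutz Schaller \cite{Scha97} (together with Buser--Sarnak for part (3)); the paper offers no proof of its own, so there is no internal argument to compare against. Your sketch does reconstruct the standard arithmetic route: ramification of the quaternion algebra $\left(\frac{-1,p}{\mathbb{Q}}\right)$ at the finite place $p$ (using $p\equiv 3\pmod 4$) giving a division algebra split at infinity, hence a cocompact Fuchsian group; a trace-congruence argument for torsion-freeness of $\Gamma_{(-1,p)}(N)$; Gauss--Bonnet plus an index count for the genus; and the Buser--Sarnak trace-congruence bound for the systole. Those are indeed the ingredients in the literature and the overall structure is sound.

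Two points worth tightening. First, in part (3) the trace congruence is sharper than you state: from the explicit parametrization the diagonal entries are $1+N(a\pm b\sqrt{p})$, so $\mathrm{tr}(\gamma)=2+2Na$, i.e.\ $\mathrm{tr}(\gamma)\equiv\pm 2\pmod{2N}$ (not merely $\bmod\ N$), and a hyperbolic element then has $|\mathrm{tr}(\gamma)|\geqslant 2N-2$. Your bound $|\mathrm{tr}(\gamma)|\geqslant N-2$ is correct but unnecessarily lossy; either way $\ell(\gamma)=2\,\mathrm{arccosh}(|\mathrm{tr}(\gamma)|/2)\geqslant C_1\log N$ follows. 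The same congruence cleanly disposes of torsion in part (1), since $2+2Na$ cannot lie in $(-2,2)$ for integer $a$ once $N\geqslant 2$. Second, the lower bound in part (2) is asserted but not argued: ``exhibiting an explicit two-parameter family of $\sim N^2$ inequivalent cosets'' is not a proof and would need to be carried out. In fact $[\Gamma_{(-1,p)}:\Gamma_{(-1,p)}(N)]$ grows like $N^3$ up to an Euler factor, so the $A_pN^2$ bound is far from sharp; still, as written the proposal leaves this as the one genuine gap. Since the paper itself simply invokes Schmutz Schaller, citing \cite{Scha97} and \cite{BuSa94} at the appropriate points, as you propose, is exactly what the paper does.
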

In the following, we use $\Sigma_{g}$ to denote the arithmetic hyperbolic surface $\mathbb{H}^{2} / \Gamma_{(-1, \, p)}(N)$ with genus $g.$ Proposition 4.1 implies that
\begin{equation}
 \sys \pi_{1}(\Sigma_{g}, \mathcal{G}_{\mathbb{H}^{2}}) \geqslant C_{2} \ts \log{g},
\end{equation}
where $C_{2}$ is a fixed positive constant independent of $g.$
Moreover, according to Buser and Sarnak \cite{BuSa94}, we have
\begin{equation}
 \lambda_{1}(\Sigma_{g}) \geqslant c_{1},
\end{equation}
where $\lambda_{1}$ is the first eigenvalue of the Laplacian on $\Sigma_{g},$ and $c_{1}$ is a fixed positive constant independent of $g.$ Let $A$ be a connected open subset of $\Sigma_{g}$, with area less than half of the area of $\Sigma_{g}$. By Buser's isoperimetric inequality (see \cite{Bu82}), we have
\begin{equation}
 \text{Area}_{\mathcal{G}_{\mathbb{H}^{2}}}(A) \leqslant C_{2} \, \length_{\G_{\mathbb{H}^{2}}} (\partial A) . 
 \label{isoperimetric}
\end{equation}
Let $p$ be a point on $\Sigma_{g}$. We take a disk $B_{t}(p)$ with radius $t$ and center $p$ on the hyperbolic surface $\Sigma_{g}$. Applying the polar coordinate of the hyperbolic metric $\G_{\mathbb{H}^{2}}$ on $\Sigma_{g}$, we have
\begin{equation*}
 \area (B_{t}(p)) = \int_{0}^{t} \ts \length_{\G_{\mathbb{H}^{2}}}(\partial B_{\rho}(p)) \, d\rho .
\end{equation*}
By (\ref{isoperimetric}), we have
\begin{align*}
 \frac{d}{dt} \, \left( \area ( B_{t}(p) ) \right) & = \length_{\G_{\mathbb{H}^{2}}}(\partial B_{t}(p)) \\
                                                       & \geqslant \frac{1}{C_{2}} \ts \area (B_{t}(p)).     
\end{align*}
On the other hand, we have $\area_{\G_{\mathbb{H}^{2}}}(\Sigma_{g}) = 4 \pi (g - 1)$ by the Gauss-Bonnet formula, so that
\begin{align*}
 t \leqslant C_{3} \ts \log{g}.
\end{align*} 
Hence we have
\begin{equation}
 \text{diameter}(\Sigma_{g}) \leqslant C_{3} \, \log{g}
 \label{diameter}
\end{equation}
with $C_{3}$ a positive constant independent of $g$, which yields that
\begin{equation*}
 \length_{\G_{\mathbb{H}^{2}}} ( \ell_{i} ) \leqslant C_{3} \, \log{g}, 
\end{equation*}
where $\{\ell_{1}, \, \ell_{2}, \, \cdots , \, \ell_{2g_{k}} \}$ is a system of loops representing a homology basis of $H_{1}(\Sigma_{g}; \Z)$.

Let $g$ be large enough. We construct an isometry map $\tau: \Sigma_{g} \to \Sigma_{g}$ of finite order (see \cite{Fr99}), such that
\begin{equation*}
 \text{Order}(\tau) \geqslant c_{2} \, ( \log{g} )^{1/2},
\end{equation*}
where $\text{Order}(\tau)$ stands for the order of $\tau,$ and $c_{2}$ is a fixed positive constant independent of $g.$ Therefore, we obtain a sequence of arithmetic hyperbolic surfaces $\{ \Sigma_{g_{k}} \}_{k=1}^{\infty}$ such that
\begin{equation*}
 2 \leqslant g_{1} < g_{2} < \cdots < g_{k} < \cdots  \qquad\quad \text{and } \qquad\quad \lim_{k\to \infty} g_{k} = \infty,
\end{equation*}
with a finite order isometry $\tau_{k}: \ts \Sigma_{g_{k}} \to \Sigma_{g_{k}}$ associated to each $\Sigma_{g_{k}}.$

Construct the mapping torus
\begin{equation*}
 M_{g_{k}} = \Sigma_{g_{k}} \times [0, 1] / (x, 0) \thicksim (\tau_{k}(x), 1),
\end{equation*}
where $\Sigma_{g_k}$ is the fiber surface. The monodromy represented by $\tau_{k}$ is of finite order. According to Thurston's theorem (Proposition \ref{Thurston}), $M_{g_{k}}$ has geometric structure $\mathbb{H}^{2} \times \R$. Then on $M_{g_{k}}$ we have a Riemannian metric $\mathcal{G}_{k}$ which is locally isometric to the standard product metric on $\mathbb{H}^{2} \times \mathbb{R}.$ Hence in terms of the sequence of arithmetic hyperbolic surfaces $\{\Sigma_{g_{k}}\}_{k=1}^{\infty},$ we have a sequence of Riemannian mapping tori $\{(M_{g_{k}}, \mathcal{G}_{k})\}_{k=1}^{\infty}.$ 

As the isometry map $\tau_{k}$ is of finite order, by Dehn-Lickorish theorem (see Theorem \ref{Dehn-Lickorish}), we have
\begin{equation*}
 \tau_{k} = \sigma_{1} \circ \sigma_{2} \circ \cdots \circ \sigma_{n_{k}},
\end{equation*}
where $\sigma_{1}, \sigma_{2}, \cdots , \sigma_{n_{k}}$ are positive or negative Dehn twists along simple geodesic loops $\gamma_1, \gamma_2, \cdots , \gamma_{n_k}$ in $\Sigma_{g_{k}}.$ The injectivity radius of $\Sigma_{g_k}$ is $\frac{\sys \pi_{1}(\Sigma_{g_{k}}, \G_{\mathbb{H}^2})}{2}$. According to Theorem \ref{Dehn-Lickorish-Luo}, there exists a positive constant $C(g_k)$ only dependent on $g_k$, such that $n_k \leqslant C(g_{k})$ and $\length(\gamma_{i}) \leqslant C(g_k)$ for each $i$. For each Dehn twist $\sigma_{i}$, we perform a Dehn surgery in $M_{g_{k}}$. All Dehn surgeries are performed at different fiber surface levels in the interval direction of $M_{g_{k}}$.  Let the radius $\varepsilon_{k}$ of solid tori in Dehn surgeries be small enough, which is controlled by $C(g_{k})$. Then all Dehn surgeries are mutually disjoint. We have a mapping torus with the monodromy represented by $\tau_{k}^{-1} \circ \tau_{k},$ which is the identity. Hence we obtain a mapping torus homeomorphic to $\Sigma_{g_{k}} \times S^{1}$. 

The arithmetic hyperbolic surface $\Sigma_{g_{k}}$ has a system of $2g_{k}$ nonseparating geodesic loops representing a basis of $H_{1}(\Sigma_{g_{k}}; \Z)$. Around each geodesic loop, we perform a Dehn surgery to let the geodesic loop be contractible. When the radius $\varepsilon_k$ is small enough, all of these $2g_k$ Dehn surgeries are mutually disjoint. Finally we obtain a $3$-manifold homeomorphic to $S^{2} \times S^{1},$ denoted $S^{2} \times S^{1}_{g_{k}}$. We use the cutoff function technique to get a smooth Riemannian metric after each Dehn surgey, see Section 6. After all $n_{k}+2g_{k}$ Dehn surgeries, we obtain a smooth Riemannian metric $\hat{\G}_{k}$ on $S^{2} \times S^{1}_{g_{k}}.$ Hence we get a sequence of Riemannian metrics on $S^{2} \times S^{1}$, denoted by
\begin{equation*}
 \{ (S^{2} \times S^{1}_{g_{k}}, \, \hat{\mathcal{G}}_{k}) \}_{k = 1}^{\infty}.
\end{equation*}    

In terms of systolic propositions of the arithmetic hyperbolic surface $\Sigma_{g_k}$ and the homological property of $S^2\times S^1$, as well as geometric properties of Dehn surgeries, the following systolic bound estimations can be derived on $S^{2} \times S^{1}_{g_{k}}$.
\begin{theorem}[Freedman \cite{Fr99}]
 There exist positive constants $c_4, c_5$ and $c_6$ independent of $g_{k}$ such that
 \begin{enumerate}
  \item
   \begin{equation}
    \sys H_{1}(S^{2} \times S^{1}_{g_{k}}, \widehat{\mathcal{G}}_{k}; \mathbb{Z}_{2}) \geqslant c_{4} \ts (\log{g_{k}})^{1/2};
   \end{equation}
  \item
   \begin{equation}
    \sys H_{2}(S^{2} \times S^{1}_{g_{k}}, \widehat{\G}_{k}; \Z_{2}) \geqslant c_{5} \ts g_{k};
   \end{equation}
  \item
   \begin{equation}
    \vol_{\widehat{\G}_{k}} (S^{2} \times S^{1}) \leqslant c_{6} g_{k}.
   \end{equation}
 \end{enumerate}
 \label{Fre_estimation}
\end{theorem}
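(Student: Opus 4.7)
The plan is to prove the three inequalities $(1), (2), (3)$ by separate arguments, choosing the surgery radius $\varepsilon_k$ small enough that all surgery contributions are negligible. The volume bound $(3)$ is elementary: by Gauss--Bonnet, $\vol_{\mathcal{G}_k}(M_{g_k}) = 4\pi(g_k - 1)$, and each of the $n_k + 2g_k$ surgeries replaces a tubular neighborhood of a geodesic of length at most $C(g_k)$ by a solid torus of radius $\varepsilon_k$, contributing volume $O(\varepsilon_k^2 \cdot C(g_k))$. Taking $\varepsilon_k$ so small that the total contribution is $O(g_k)$ yields $\vol_{\widehat{\mathcal{G}}_k}(S^2 \times S^1_{g_k}) \leq c_6 \, g_k$.

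For the $2$-systole estimate $(2)$, I would exploit the projection $\pi \colon S^2 \times S^1_{g_k} \to S^1$ coming from the mapping torus structure. Away from the finitely many surgery levels, each fiber $F_t$ is a copy of $\Sigma_{g_k}$ with the hyperbolic metric outside small disks, so $\area(F_t) \geq c_5' \, g_k$. A surface $\Sigma$ representing the nontrivial class in $H_2(S^2 \times S^1_{g_k}; \mathbb{Z}_2)$ is Poincar\'e-dual to $\pi^{*}(\alpha)$, where $\alpha$ is the generator of $H^1(S^1; \mathbb{Z}_2)$; consequently $\pi|_{\Sigma} \colon \Sigma \to S^1$ has mod-$2$ degree $1$, so that $\Sigma$ must meet almost every fiber in an odd number of components. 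A slicing/comparison argument against the fiber representative, combined with the fact that the minimum fiber area is $\asymp g_k$, then yields $\area(\Sigma) \geq c_5 \, g_k$.

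For the $1$-systole estimate $(1)$, let $\ell$ represent the nontrivial class in $H_1(S^2 \times S^1_{g_k}; \mathbb{Z}_2)$, so that $\pi \circ \ell$ has odd winding in $S^1$. If $\ell$ avoids the surgery tori it lies in $(M_{g_k}, \mathcal{G}_k)$ and lifts to a path in $\mathbb{H}^2 \times \mathbb{R}$ from $(\tilde{x}, 0)$ to $(\tilde{\tau}_k^{\, j}(\tilde{x}), j)$ for some odd $j$, whose length is at least $\sqrt{d_{\mathbb{H}^2}(\tilde{x}, \tilde{\tau}_k^{\, j}(\tilde{x}))^2 + j^2}$. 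Since $\tau_k$ acts freely on $\Sigma_{g_k}$ by isometries of order $N_k \asymp (\log g_k)^{1/2}$, its lift $\tilde{\tau}_k$ is a hyperbolic isometry of $\mathbb{H}^2$ whose $N_k$-th power is a nontrivial deck transformation of $\Sigma_{g_k}$; hence $N_k$ times the translation length of $\tilde{\tau}_k$ is at least $\sys \pi_1(\Sigma_{g_k}) \geq C_2 \log g_k$, and the translation length is at least a constant multiple of $(\log g_k)^{1/2}$. Optimizing over odd $j$ forces $\length(\ell) \geq c_4 (\log g_k)^{1/2}$. For loops that do pass through surgery solid tori, the available shortcut has length $O(\varepsilon_k)$ and does not affect the asymptotic bound.

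The main obstacle is $(1)$. One must use the arithmetic construction of $\tau_k$ carefully to guarantee both free action and a controlled translation length of its $\mathbb{H}^2$-lift, and then one must design the cutoff metric on each surgery (cf.\ Section~6) so that no mod-$2$ nontrivial loop can be shortened below $(\log g_k)^{1/2}$ by taking a shortcut through a glued solid torus. Securing both conditions simultaneously by a sufficiently small uniform choice of $\varepsilon_k$ is the technical heart of the argument.
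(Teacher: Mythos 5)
The paper does not prove this theorem; it is stated in Section~4 as a result of Freedman~\cite{Fr99}, with the surrounding text only reviewing the ingredients of the construction (the arithmetic surfaces $\Sigma_{g_k}$, the finite--order isometry $\tau_k$, the mapping torus $M_{g_k}$, and the Dehn surgeries). So there is no ``paper's own proof'' to compare your sketch to; the only traces of the argument in the paper are Proposition~4.1, the Buser--Sarnak bound $\lambda_1(\Sigma_{g_k}) \geqslant c_1$, Buser's isoperimetric inequality~\eqref{isoperimetric}, the diameter bound~\eqref{diameter}, and the remark in the proof of Proposition~6.1 that Freedman's Proposition~2.3 gives $\sys\pi_1(M_{g_k},\mathcal{G}_k) \geqslant C'(\log g_k)^{1/2}$. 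Measured against those ingredients, your sketch for~(3) is fine, but~(1) and~(2) each have a real gap.

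For~(1), when a loop $\ell$ of winding number $j$ is lifted to $\mathbb{H}^2 \times \mathbb{R}$, the endpoint is $\bigl(\alpha\,\tilde\tau_k^{\,j}(\tilde x),\, j\bigr)$ for an \emph{arbitrary} $\alpha \in \pi_1(\Sigma_{g_k}) = \Gamma$, not $\bigl(\tilde\tau_k^{\,j}(\tilde x),\, j\bigr)$; different free--homotopy classes with the same winding number $j$ produce different $\alpha$, and there is also no canonical choice of lift $\tilde\tau_k$. The quantity you actually need to bound below is the minimal displacement $\inf_{q\in\Sigma_{g_k}} d_{\mathbb{H}^2}(q,\tau_k^{\,j}(q))$, equivalently $\inf_{\alpha\in\Gamma} \ell(\alpha\tilde\tau_k^{\,j})$ over the whole coset. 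One can recover your conclusion by noting that, provided $\alpha\tilde\tau_k^{\,j}$ is not elliptic (i.e.\ $\tau_k^{\,j}$ acts freely), some bounded power $(\alpha\tilde\tau_k^{\,j})^{m}$ with $m \leqslant N_k$ lands in $\Gamma\setminus\{1\}$, giving $\ell(\alpha\tilde\tau_k^{\,j}) \geqslant \sys\pi_1(\Sigma_{g_k})/N_k$. But this still requires two inputs that are not in the proposal and not stated in the paper: freeness of $\tau_k^{\,j}$ for $0 < j < N_k$, and an \emph{upper} bound $N_k = O\bigl((\log g_k)^{1/2}\bigr)$ --- the paper only records $N_k \geqslant c_2(\log g_k)^{1/2}$, and the inequality you need is $\log g_k / N_k \gtrsim (\log g_k)^{1/2}$, which fails if $N_k$ is allowed to be as large as $\log g_k$.

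For~(2), the claim that ``$\pi|_{\Sigma}\colon\Sigma\to S^1$ has mod-$2$ degree $1$, so $\Sigma$ meets almost every fiber in an odd number of components'' is not well posed: $\pi|_{\Sigma}$ is a map from a surface to a circle and has no degree, and the transversal intersection $\Sigma \cap \pi^{-1}(t)$ is a $1$-dimensional family of curves, not a $0$-cycle whose parity you can track. The correct (and the paper's) intersection statement is that $[\Sigma]\cdot[\text{core }S^1]$ is odd. More importantly, ``minimum fiber area $\asymp g_k$'' alone cannot lower-bound $\area(\Sigma)$: a priori a homologous surface could slice the fibers in very short curves. What actually makes the argument work is the expander property $\lambda_1(\Sigma_{g_k}) \geqslant c_1$ (Buser--Sarnak), which via Buser's inequality~\eqref{isoperimetric} forces the separating slices $\Sigma\cap\pi^{-1}(t)$ to be long on a definite fraction of $t$, and then the coarea formula closes the estimate. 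Your sketch should appeal explicitly to this linear isoperimetric inequality rather than to the fiber area.

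\end{document}
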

Based on above esitmations in Theorem \ref{Fre_estimation}, we obtain (\ref{Freedman}). Hence the $3$-manifold $S^2 \times S^1$ is of $\Z_{2}$-coefficient homology $(1, 2)$-systolic freedom.   

\section{$3$-manifolds with semibundle structure}

Roughly speaking, a $3$-manifold with the semibundle structure is composed by the union of two twisted $I$-bundles, which are glued together along their common boundary surfaces. We introduce some preliminary knowledge of semibundles in this section, more details can be found in \cite{He04, HeJa72, Zu97}.

Let $M$ be a closed and connected $3$-manifold. A halving of $M$ is an index two subgroup of $\pi_{1}(M).$ When $M$ has a halving $H,$ there exists a two-sheeted covering $3$-manifold whose fundamental group is isomorphic to $H$, denoted $M_{H}.$ Assume that $Q_{H}: M_{H} \to M$ is the covering map, and $\alpha_{H}: M_{H} \to M_{H}$ is the covering translation. Then we have $M \simeq M_{H} / \alpha_{H}.$

We express the unit circle $S^{1}$ as the set of complex numbers with unit module on the complex plane $\C^{1}$. Denote the closed interval $[-1, 1]$ by $D^{1}$. Let $\tau: S^{1} \to S^{1}$ be the map of complex conjugation, and $q: S^{1} \to D^{1}$ be the projection map defined by $z \mapsto \text{Re}(z).$ The semibundle structure of $3$-manifolds can be defined in the following way, see Zulli \cite{Zu97}.
\begin{definition}
 Let $M$ be a closed and oriented connected $3$-manifold with the halving $H$. A map $f: M \to D^{1}$ is called a semibundle subordinate to the halving $H$ (or an $H$-semibundle) with regular fiber surface $\Sigma$ if there exists a two-sheeted covering surface bundle $F: M_{H} \to S^{1}$ with fiber surface $\Sigma,$ such that $q \circ F = f \circ Q_{H}$ and $F \circ \alpha_{H} = \tau \circ F.$ 
 \label{semi}
\end{definition}

The definition yields the following properties of an $H$-semibundle.
\begin{proposition}[Zulli \cite{Zu97}]
 Let $f: M \to D^{1}$ be an $H$-semibundle. Let $\Sigma_{g}$ be a closed surface with genus $g.$ Assume that the covering surface bundle $M_{H}$ has fiber surface $\Sigma_{g}.$
 \begin{enumerate}
  \item When $t\in (-1, \, 1),$ $f^{-1}(t)$ is homeomorphic to the fiber surface $\Sigma_{g}$ of $M_{H}.$ The regular fiber surface $f^{-1}(t)$ is lifted to two copies of $\Sigma_{g}$ in $M_{H}$ which are exchanged by $\alpha_{H}.$ When $t = -1$ or $t = 1,$ $f^{-1}(t)$ is an embedded surface which is doubly covered by the fiber surface $F^{-1}(t) = \Sigma_{g}.$
  \item If we use $J$ to denote the interval $[-1, \, 0]$ or $[0, \, 1],$ then $f^{-1}(J)$ is a twisted $I$-bundle. Denote two twisted $I$-bundles $f^{-1}([-1, \, 0])$ and $f^{-1}([0, \, 1])$ by $M_{1}$ and $M_{2}$ respectively. We have $M = M_{1} \cup M_{2},$ and $M_{1} \cap M_{2} = \Sigma_{g}.$ 
 \end{enumerate}
\end{proposition}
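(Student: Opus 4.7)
\medskip

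\noindent\textbf{Proof proposal.} The argument I would give is a diagram chase using the two defining relations $q \circ F = f \circ Q_{H}$ and $F \circ \alpha_{H} = \tau \circ F$ from Definition \ref{semi}, combined with the fact that $\alpha_{H}$, being a nontrivial deck transformation of the connected double cover $Q_{H}: M_{H} \to M$, acts freely on $M_{H}$.

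For part (1), I first analyze $q: S^{1} \to D^{1}$. For $t \in (-1, 1)$, the preimage $q^{-1}(t) = \{t + i\sqrt{1-t^{2}}, \, t - i\sqrt{1-t^{2}}\}$ consists of two points exchanged by complex conjugation $\tau$, while for $t = \pm 1$ the preimage is the single fixed point of $\tau$. The commutation $q \circ F = f \circ Q_{H}$ gives $Q_{H}^{-1}(f^{-1}(t)) = F^{-1}(q^{-1}(t))$. So for $t \in (-1, 1)$, $Q_{H}^{-1}(f^{-1}(t))$ is a disjoint union of two copies of the fiber surface $\Sigma_{g}$, and $F \circ \alpha_{H} = \tau \circ F$ says that $\alpha_{H}$ swaps these two copies. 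Because $Q_{H}$ is the quotient by $\alpha_{H}$, the image $f^{-1}(t)$ is homeomorphic to one copy of $\Sigma_{g}$, and the two copies in $M_{H}$ are indeed exchanged by $\alpha_{H}$. For $t = \pm 1$, $F^{-1}(q^{-1}(t))$ is a single copy of $\Sigma_{g}$ on which $\alpha_{H}$ restricts to an involution (since $\tau$ fixes $\pm 1$, this copy is $\alpha_{H}$-invariant). Freeness of $\alpha_{H}$ on $M_{H}$ forces this involution to be free, so $f^{-1}(\pm 1) = F^{-1}(\pm 1)/\alpha_{H}$ is an embedded surface doubly covered by $\Sigma_{g}$.

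For part (2), I would first observe that $q^{-1}([-1, 0])$ is the closed arc in $S^{1}$ from $i$ through $-1$ to $-i$; this arc is contractible, so $F^{-1}(q^{-1}([-1, 0]))$ is trivialized as $\Sigma_{g} \times [0, 1]$. The restriction of $\tau$ to this arc is the reflection fixing the midpoint $-1$, and the lifted involution $\alpha_{H}$ covers it. Writing the lift in trivialized coordinates as $(x, s) \mapsto (\beta(x), 1 - s)$ for an involution $\beta$ of $\Sigma_{g}$, freeness of $\alpha_{H}$ forces $\beta$ to be free. The quotient $f^{-1}([-1, 0]) = F^{-1}(q^{-1}([-1, 0]))/\alpha_{H}$ is then exactly the mapping cylinder of the degree-two covering $\Sigma_{g} \to \Sigma_{g}/\beta$, which is the standard model of a twisted $I$-bundle over $\Sigma_{g}/\beta = f^{-1}(-1)$; its boundary is the single surface $f^{-1}(0) = \Sigma_{g}$. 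The same argument applies to $f^{-1}([0, 1])$, and the decomposition $M = M_{1} \cup M_{2}$ with $M_{1} \cap M_{2} = f^{-1}(0) = \Sigma_{g}$ follows tautologically from $D^{1} = [-1, 0] \cup [0, 1]$.

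The main obstacle I anticipate is the bookkeeping at the endpoint fibers $t = \pm 1$: one must verify that the commutative relations force $\alpha_{H}$ to preserve each endpoint fiber setwise and to act freely there, and then repackage the quotient as a genuine twisted $I$-bundle rather than merely a quotient manifold. Once freeness of $\alpha_{H}$ is invoked (from its role as a deck transformation) and the local triviality of $F$ over the contractible arcs is used to trivialize the lift, everything else is a routine unwrapping of the two commuting diagrams.
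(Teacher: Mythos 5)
The paper does not prove this proposition; it is stated with the attribution to Zulli \cite{Zu97} and cited without argument, so there is no in-paper proof to compare against. Your reconstruction is correct in its essentials and is the natural argument: unwind the two commuting relations $q \circ F = f \circ Q_{H}$ and $F \circ \alpha_{H} = \tau \circ F$, note that $q^{-1}(t)$ has two points for $|t|<1$ and one fixed point of $\tau$ for $t=\pm 1$, and use freeness of the deck transformation $\alpha_{H}$ to control the endpoint fibers and to identify $f^{-1}([-1,0])$ with a twisted $I$-bundle. One small point worth tightening in part (2): after trivializing $F$ over the contractible arc $q^{-1}([-1,0])$ as $\Sigma_{g}\times[0,1]$, the lift $\alpha_{H}$ covering the reflection $s\mapsto 1-s$ need not a priori have the split form $(x,s)\mapsto(\beta(x),1-s)$ with $\beta$ independent of $s$; in general it has the form $(x,s)\mapsto(\beta_{s}(x),1-s)$ with $\beta_{1-s}\circ\beta_{s}=\mathrm{id}$. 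This does not affect the conclusion: the restriction to the midlevel $\Sigma_{g}\times\{1/2\}$ is a free involution $\beta=\beta_{1/2}$, a fundamental domain for the $\Z_{2}$-action is $\Sigma_{g}\times[0,1/2]$ with $\Sigma_{g}\times\{1/2\}$ self-identified by $\beta$, and the quotient is exactly the mapping cylinder of the double cover $\Sigma_{g}\to\Sigma_{g}/\beta$, i.e.\ the twisted $I$-bundle over $f^{-1}(-1)$. With that minor repair the argument is complete and matches what one would expect Zulli's proof to be.
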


We have the following estimation for homotopy $1$-systole of an $H$-semibundle $M.$ 
\begin{proposition}
 Let $M$ be an $H$-semibundle with the Riemannian metric $\mathcal{G}.$ We use $\tilde{\G}$ to denote the covering metric on covering surface bundle $M_{H}$. Then we have
 \begin{equation}
  \sys \pi_{1}(M, \mathcal{G}) \geqslant \frac{1}{2} \, \sys \pi_{1}(M_{H}, \, \tilde{\mathcal{G}}).
  \label{semi_1}
 \end{equation}
 \label{semibundle_1}
\end{proposition}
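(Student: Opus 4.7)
The plan is to produce, from any noncontractible loop $\gamma$ in $M$, a noncontractible loop in $M_H$ of length at most twice $\length_{\G}(\gamma)$; the inequality then follows by taking the infimum over $\gamma$. Fix a loop $\gamma$ in $M$ with $[\gamma]\neq 1$ in $\pi_1(M)$ and length $L = \length_{\G}(\gamma)$, choose a basepoint $p$ on $\gamma$, pick any preimage $\tilde{p}\in Q_H^{-1}(p)$, and lift $\gamma$ to a path $\tilde{\gamma}$ in $M_H$ starting at $\tilde{p}$. Because $\tilde{\G}$ is the pullback metric, $Q_H$ is a local isometry and $\length_{\tilde{\G}}(\tilde{\gamma}) = L$.

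The argument then splits according to whether $[\gamma]$ lies in the halving $H = \pi_1(M_H)\subset \pi_1(M)$. If $[\gamma]\in H$, then $\tilde{\gamma}$ is already a closed loop in $M_H$; since the covering projection induces an injection $\pi_1(M_H)\hookrightarrow \pi_1(M)$ and $\gamma$ is noncontractible in $M$, the lift $\tilde{\gamma}$ is noncontractible in $M_H$, yielding $\sys \pi_1(M_H,\tilde{\G}) \leqslant L$, which is even stronger than needed. If $[\gamma]\notin H$, then $\tilde{\gamma}$ terminates at $\alpha_H(\tilde{p})$ rather than closing up. Because $\alpha_H$ is an isometry of $(M_H,\tilde{\G})$, the translated path $\alpha_H\circ \tilde{\gamma}$ runs from $\alpha_H(\tilde{p})$ back to $\alpha_H^2(\tilde{p}) = \tilde{p}$ with the same length $L$. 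The concatenation $\tilde{\gamma}\cdot (\alpha_H\circ \tilde{\gamma})$ is then a loop in $M_H$ of length $2L$ projecting to $\gamma^2$ under $Q_H$. Using the injection $\pi_1(M_H)\hookrightarrow \pi_1(M)$ again, this lifted loop is noncontractible in $M_H$ provided $[\gamma]^2\neq 1$ in $\pi_1(M)$, which gives $\sys \pi_1(M_H,\tilde{\G}) \leqslant 2L$.

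In either case $L \geqslant \frac{1}{2}\sys \pi_1(M_H,\tilde{\G})$, and taking the infimum over all noncontractible $\gamma$ in $M$ produces the claimed inequality \eqref{semi_1}. The one step that genuinely requires care is the possibility that $[\gamma]$ has order two in $\pi_1(M)$ and lies outside $H$: then $\gamma^2$ is contractible in $M$, and so is the lift $\tilde{\gamma}\cdot(\alpha_H\circ \tilde{\gamma})$ in $M_H$, so the doubling trick yields nothing. I would dispatch this case by invoking the semibundle structure: $M_H$ is a surface bundle over $S^1$, so $\pi_1(M_H)$ is an extension of $\mathbb{Z}$ by a surface group and hence torsion free, which forces the relevant short loop phenomena to be detected already by honest lifts in $M_H$ rather than by $2$-torsion in $\pi_1(M)$. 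Making this dichotomy precise, while keeping the constant $1/2$, is the main obstacle and the only place where the specific semibundle hypothesis (as opposed to an arbitrary index-two cover) is actually used.
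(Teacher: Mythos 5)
Your decomposition into the three cases is exactly the paper's decomposition, and in the first two cases your argument matches the paper's almost word for word (the paper writes $\hat{\gamma} = \tilde{\gamma} \cup \alpha_H(\tilde{\gamma})$ for your concatenation). The genuine contribution of your writeup is that you spot the gap in the third case: the paper goes from $[\gamma]^2 \in H$ to \emph{``Hence $\hat{\gamma}$ is a noncontractible loop''} without noticing that $[\gamma]^2 \in H$ is compatible with $[\gamma]^2 = 1$, in which case $\hat{\gamma}$ is contractible by injectivity of $(Q_H)_*$. That is a real gap in the published proof.

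However, your proposed repair does not close it. You invoke torsion-freeness of $\pi_1(M_H)$ and assert this ``forces the relevant short loop phenomena to be detected already by honest lifts in $M_H$ rather than by $2$-torsion in $\pi_1(M)$,'' but torsion-freeness of an index-two subgroup says nothing about torsion in the ambient group: $\pi_1(S^2 \times S^1) = \Z$ is torsion-free, while its index-two overgroup $\pi_1(\RP^3 \# \RP^3) = \Z_2 * \Z_2$ is full of $2$-torsion. And the obstruction you worried about genuinely occurs, so that as literally stated the proposition is false. Take $M_H = S^2 \times S^1$ with a product metric where the $S^2$ factor has radius $\varepsilon$ and the $S^1$ factor has circumference $1$, and quotient by $(x, \theta) \mapsto (-x, -\theta)$; the quotient $M$ is $\RP^3 \# \RP^3$ with the semibundle structure over $D^1$, and it contains a geodesic of length $\pi\varepsilon$ on one of the $\RP^2$-caps representing a $2$-torsion class. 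Letting $\varepsilon \to 0$ gives $\sys\pi_1(M, \G) \to 0$ while $\sys\pi_1(M_H, \tilde{\G}) = 1$ is fixed, violating the inequality.

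The repair that actually works is a hypothesis on the fiber, not on $\pi_1(M_H)$: if the regular fiber $\Sigma_g$ has genus $g \geqslant 1$, then $M_H$ is a surface bundle over $S^1$ with aspherical fiber and base, hence aspherical, hence $M$ (having the same universal cover) is a closed aspherical $3$-manifold and $\pi_1(M)$ has cohomological dimension $3 < \infty$, forcing it to be torsion-free. Then $[\gamma]^2 = 1$ with $[\gamma] \neq 1$ is impossible, the third case does not arise, and the paper's (and your) argument goes through. This is consistent with how the paper actually uses the proposition: it is applied to the semibundle $N_{g_k}$ over $M_{g_k}$ with fiber $\Sigma_{g_k}$ of genus $g_k \geqslant 2$, \emph{before} the Dehn surgeries that introduce the $\Z_2 * \Z_2$ fundamental group. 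So the proposition needs the hypothesis $g \geqslant 1$ (or ``$M$ aspherical'') to be correct, and your diagnosis of the gap is right, but your suggested bridge across it is pointed at the wrong group.
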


\begin{proof}
For every noncontractible loop $\gamma$ in $M,$ there exists a lifting to the covering surface bundle $M_{H}.$ We denote the lifting of $\gamma$ in $M_{H}$ by $\tilde{\gamma}.$ Suppose that $[\gamma]$ is the homotopy class in $\pi_{1}(M)$ represented by $\gamma,$ and $[\tilde{\gamma}]$ is the homotopy class in $\pi_{1}(M_{H})$ represented by $\tilde{\gamma}.$
 
If $[\gamma] \in H,$ as $(Q_{H})_{*}: \pi_{1}(M_{H}) \to \pi_{1}(M)$ is injective and 
\begin{equation*}
 (Q_{H})_{*}(\pi_{1}(M_{H})) = H ,
\end{equation*}
we must have $[\tilde{\gamma}] \neq 1$ in $\pi_{1}(M_{H}).$ Hence we have
\begin{equation*}
 \length_{\G}(\gamma) = \length_{\tilde{\G}}(\tilde{\gamma}).
\end{equation*}
If $[\gamma] \in \pi_{1}(M) - H,$ as $\pi_{1}(M) / H \simeq \mathbb{Z}_{2}$, we have $\pi_{1}(M) = H \cup [\gamma]H$ so that $[\gamma]^{2} \in H.$ Hence the union $\hat{\gamma} = \tilde{\gamma} \cup \alpha_{H}(\tilde{\gamma}) $ is a noncontractible loop in $M_{H}$, which doubly covers $\gamma.$ Then we have
\begin{equation*}
 \length_{\G}(\gamma) = \frac{1}{2} \ts \length_{\tilde{\G}}(\hat{\gamma}). 
\end{equation*}
After taking the infimum over all noncontractible loops $\gamma$ in $M,$ we have
\begin{equation*}
 \sys \pi_{1}(M, \G) \geqslant \frac{1}{2} \tts \sys \pi_{1}(M_{H}, \tilde{\G}).
\end{equation*}
\end{proof}

\section{Proof of the main theorem}

The proof of Theorem \ref{Chen14} is separated into two steps. In the first step, we apply Freedman's technique (see Section 4) to construct a sequence of Riemannian metrics on $\RP^{3} \# \RP^{3}$, denoted by
\begin{equation*} 
 \{ (\RP^{3} \# \RP^{3}_{g_{k}}, \, \hat{\mathscr{G}}_{k}) \}_{k = 1}^{\infty} .
\end{equation*} 
Then in the second step, we prove that the sequence of metrics $\hat{\mathscr{G}}_{k}$ yields $\Z_{2}$-coefficient homology $(1, 2)$-systolic freedom.

\subsection{Construction of Riemannian metrics on $\RP^{3} \# \RP^{3}$}

In Section 4, we introduced Freedman's proof of $\Z_2$-coefficient homology $(1, 2)$-systolic freedom on $S^{2} \times S^1$. In Freedman's work, a sequence of arithmetic hyperbolic surfaces
\begin{equation*}
 \{ (\Sigma_{g_{k}}, \, \tau_{k}) \}_{k = 1}^{\infty},
\end{equation*}
is constructed, where $\tau_{k}: \, \Sigma_{g_{k}} \to \Sigma_{g_{k}}$ is the isometry map of finite order. Upon $(\Sigma_{g_{k}}, \tau_k)$, the Riemannian mapping torus $(M_{g_{k}}, \G_k)$ is constructed. 

In this section, we use $I_{1}$ to denote the interval $[-1, \, 0],$ and use $I_{2}$ to denote the interval $[0, \, 1].$ Define two twisted $I$-bundles over $I_{1}$ and $I_{2}$ with the regular fiber surface $\Sigma_{g_{k}},$ denoted $ \Sigma_{g_{k}} \tilde{\times} I_{1}$ and $ \Sigma_{g_{k}} \tilde{\times} I_{2}$ respectively. For each $k,$ we construct a $3$-manifold $N_{g_{k}}$ with the following semibundle structure: 
\begin{equation*}
 \left( \Sigma_{g_{k}} \tilde{\times} I_{1} \right) \cup_{\tau_{k}} \left( \Sigma_{g_{k}} \tilde{\times} I_{2} \right),
\end{equation*} 
where the two twisted $I$-bundles are glued together along their common boundary surface $\Sigma_{g_{k}}.$ The $3$-manifold $N_{g_{k}}$ is doubly covered by the mapping torus $M_{g_{k}}$. Hence there is a Riemannian metric on $N_{g_{k}}$, which is locally isometric to the standard product metric on $\mathbb{H}^{2} \times \R,$ denoted $\mathscr{G}_{k}.$ The Riemannian metric $\G_{k}$ on $M_{g_{k}}$ becomes the covering metric. Denote $\pi_{1}(M_{g_k})$ by $H$. The $3$-manifold $N_{g_{k}}$ is an $H$-semibundle. Therefore in terms of $\{\Sigma_{g_k}\}_{k = 1}^{\infty}$, we constructed a sequence of Riemannian $H$-semibundles $\{(N_{g_{k}}, \mathscr{G}_{k})\}_{k = 1}^{\infty}.$ 

As mentioned in Section 4, the Dehn-Lickorish twist theorem implies that the isometry map $\tau_{k}$ is decomposed into a sequence of $n_{k}$ Dehn twists along $2g_{k} + 1$ nonseparating simple geodesic loops on $\Sigma_{g_{k}}.$ For each Dehn twist, we perform a Dehn surgery. Moreover, all Dehn surgeries are performed at different levels of the regular fiber surface $\Sigma_{g_{k}}.$ Hence they are pairwise disjoint when the radius $\varepsilon_{k}$ of solid tori is small enough. After we finish all these Dehn surgeries, the $3$-manifold $N_{g_{k}}^{\prime}$ obtained is homeomorphic to a semibundle which is doubly covered by the surface bundle $\Sigma_{g_{k}} \times S^{1}.$ The metric change in Dehn surgeries is described as follows.

Suppose that $\tau_{k} = \sigma_{1} \circ \sigma_{2} \circ \cdots \circ \sigma_{n_{k}}$ (see Section 4), where $\sigma_{1}, \sigma_{2}, \cdots , \sigma_{n_{k}}$ are Dehn twists along $2g_{k} + 1$ nonseparating simple geodesic loops on $\Sigma_{g_{k}}.$ Moreover, we assume that the Dehn twist $\sigma_{i}$ is along a nonseparating geodesic loop $\gamma_{i},$ for $i = 1, 2, \cdots , n_{k}.$ All Dehn surgeries in $N_{g_{k}}$ for these Dehn twists are performed at the following distinct regular fiber surface levels:
\begin{equation*}
 \gamma_{1} \times \left\{ \frac{1}{2} \right\}, \, \, \gamma_{2} \times \left\{ \frac{1}{2} + \frac{1}{3n_{k}} \right\}, \, \cdots , \, \gamma_{n_{k}} \times \left\{ \frac{1}{2} + \frac{n_{k} - 1}{3n_{k}} \right\}.
\end{equation*}
Let $L_{i, k}$ denote $\length_{\G_{\mathbb{H}^{2}}}(\gamma_{i})$. We have
\[ L_{i, k} \leqslant C(g_{k}) \]
according to Theorem \ref{Dehn-Lickorish-Luo}. Let $\mathscr{D}_{i}$ be the Dehn surgery corresponding to the twist $\sigma_{i}$. In $\mathscr{D}_{i}$, we first remove a solid torus $T_{i, \ts \varepsilon_{k}}$ with the radius $\varepsilon_{k}$. The solid torus $T_{i, \ts \varepsilon_{k}}$ is a tubular neighborhood of a geodesic loop in $N_{g_{k}}.$ And $\gamma_{i}$ is the longitude loop on boundary torus $\partial T_{i, \, \varepsilon_{k}}.$ The radius $\varepsilon_k$ is assumed to be small enough, which is less than $1/C(g_k)$. Let $\delta_{k} = \varepsilon_{k}/4$. We fill in a solid torus $\tilde{T}_{i, \, \varepsilon_{k} + \delta_{k}}$ with the radius $\varepsilon_{k} + \delta_{k}$ to the semibundle complement $N_{g_{k}} - T_{i, \ts \varepsilon_{k}}^{\circ}.$ The filled in solid torus $\tilde{T}_{i, \, \varepsilon_{k} + \delta_{k}}$ is defined as follows. Let $\bar{T}_{i, \, \varepsilon_{k} + \delta_{k}}$ be the solid torus 
\begin{equation*}
 \left\{ (r, \theta, t) \left| 0 \leqslant r \leqslant \frac{L_{i, \, k}}{2 \pi} + \delta_{k}, \, 0 \leqslant \theta \leqslant 2\pi, \, 0 \leqslant t \leqslant 2\pi \varepsilon_{k} \right. \right\} {\bigg /} \sim ,
\end{equation*}
with $\sim$ the identification $(r, \, \theta, \, 0) \mapsto (r, \, \theta, \, 2\pi\varepsilon_{k}).$ On $\bar{T}_{i, \, \varepsilon_{k} + \delta_{k}},$ we define the Euclidean metric $\bar{\mathcal{G}}_{k},$ which is expressed as $dr^{2} + r^{2}d\theta^{2} + dt^{2}$. Define a smooth twisting map $\beta_{i, \, k}$ on the solid torus $\bar{T}_{i, \, \varepsilon_{k} + \delta_{k}}.$ When restricted to the disk
\begin{equation*}
 \{(r, \, \theta, \, \pi\varepsilon_{k}) \left| 0 \leqslant r \leqslant \frac{L_{i, \, k}}{2\pi} + \delta_k, \, 0 \leqslant \theta \leqslant 2\pi \right. \}
\end{equation*}
in $\bar{T}_{i, \ts \varepsilon_k + \delta_k}$, the twisting map $\beta_{i, \, k}$ is a $\pi$-rotation. And outside of a neighborhood of this disk, it is the identity map. After performing the twisting $\beta_{i, \, k}$ on $\bar{T}_{i, \, \varepsilon_{k} + \delta_{k}},$ we have the desired filled in solid torus $\tilde{T}_{i, \, \varepsilon_{k} + \delta_{k}} = \beta_{i, \, k}(\bar{T}_{i, \, \varepsilon_{k} + \delta_{k}}).$ The metric on $\tilde{T}_{i, \, \varepsilon_{k} + \delta_{k}}$ is defined to be the pullback metric $(\beta_{i, \, k}^{-1})^{*} \, \bar{\mathcal{G}}_{k},$ denoted $\tilde{\G}_{k}.$ 

Next we fill in the solid torus $\tilde{T}_{i, \, \varepsilon_{k} + \delta_{k}}$ to the semibundle complement $N_{g_{k}} - T_{i, \, \varepsilon_{k}}^{\circ}.$ Let $\bar{Y}_{i, \, k}$ be an annulus product in the solid torus $\bar{T}_{i, \, \varepsilon_{k}+\delta_{k}}$ defined by
\begin{equation*}
\left\{ (r, \, \theta, \, t) \left| \frac{L_{i, \, k}}{2\pi} \leqslant r \leqslant \frac{L_{i, \, k}}{2\pi} + \delta_{k}, \, 0 \leqslant \theta \leqslant 2\pi , \, 0 \leqslant t \leqslant 2 \pi \varepsilon_{k} \right. \right\} {\bigg /} \sim .
\end{equation*}
Hence we have an annulus product $\tilde{Y}_{i, \, k} = \beta_{i, \, k}(\bar{Y}_{i, \, k}) \subset \tilde{T}_{i, \tts \varepsilon_k + \delta_k}$. Let $T_{i, \ts \varepsilon_k + \delta_k}$ be the solid torus with radius $\varepsilon_k + \delta_k$. The solid torus $T_{i, \ts \varepsilon_k + \delta_k}$ has the same core loop with $T_{i, \ts \varepsilon_k}$, and $T_{i, \ts \varepsilon_k} \subset T_{i, \ts \varepsilon_k + \delta_k}$. Define the annulus product $Y_{i, \ts k} \subset T_{i, \ts \varepsilon_k + \delta_k}$ by
\begin{equation*}
 \left\{ (r, \, \theta, \, t) \left| \, \varepsilon_{k} \leqslant r \leqslant \varepsilon_{k} + \delta_{k}, \, 0 \leqslant \theta \leqslant 2 \pi, \, 0 \leqslant t \leqslant L_{i, \, k} \right. \right\} / \sim,
\end{equation*}
so that $T_{i, \ts \varepsilon_k + \delta_k} = T_{i, \ts \varepsilon_k} \cup Y_{i, \ts k}$. The gluing map $f_{i, \, k}: \tilde{Y}_{i, \, k} \to Y_{i, \, k}$ is defined as
\begin{equation*}
 (r, \, \theta, \, t) \mapsto \left( r - \frac{L_{i, \, k}}{2\pi} + \varepsilon_{k}, \, \frac{t}{\varepsilon_{k}}, \, \frac{L_{i, \, k}}{2\pi} \theta \right).
\end{equation*}
We use $\bar{T}_{i, \, \varepsilon_{k}}$ to denote the solid torus with radius $L_{i, \, k}/2\pi$ inside of $\bar{T}_{i, \, \varepsilon_{k} + \delta_{k}}$, and use $\tilde{T}_{i, \, \varepsilon_{k}}$ to denote $\beta_{i, \, k}(\bar{T}_{i, \, \varepsilon_{k}})$. The solid torus $\bar{T}_{i, \ts \varepsilon_k}$ has the same core loop with $\bar{T}_{i, \ts \varepsilon_k + \delta_k}$, and $\bar{T}_{i, \ts \varepsilon_k + \delta_k} = \bar{T}_{i, \ts \varepsilon_k} \cup \bar{Y}_{i, \ts k}. $ If $\bar{m}_{i, \, k}$ stands for the meridian loop 
\begin{equation*}
 \left\{ \left. \left( \frac{L_{i, \, k}}{2\pi}, \, \theta , \, \pi \varepsilon_{k} \right) \right| 0 \leqslant \theta \leqslant 2\pi \right\}
\end{equation*}
of the boundary torus $\partial \bar{T}_{i, \, \varepsilon_{k}},$ then $\tilde{m}_{i, \, k} = \beta_{i, \, k}(\bar{m}_{i, \, k})$ is the meridian loop on the boundary torus $\partial \tilde{T}_{i, \, \varepsilon_{k}}.$ After gluing by using the filling map $f_{i, \, k}$, we have $\partial T_{i, \, \varepsilon_{k}} = f_{i, \, k}(\partial \tilde{T}_{i, \, \varepsilon_{k}}).$ The meridian loop $\tilde{m}_{i, \ts k}$ is glued with the longitude loop $\gamma_{i}$ on the boundary torus $\partial T_{i, \, \varepsilon_{k}}.$

In order to get a smooth Riemannian metric after the Dehn surgery $\mathscr{D}_{i},$ we apply the cutoff function technique. Define a smooth cutoff fucntion $\phi_{i, \ts k}$ on $\overline{N}_{g_{k}} = (N_{g_{k}} - T_{i, \, \varepsilon_{k}}^{\circ}) \cup_{f_{i, \, k}} \tilde{T}_{i, \, \varepsilon_{k} + \delta_{k}}$ as follows:
\begin{equation*}
 \phi_{i, \, k}(x) = \left\{
  \begin{array}{ll}
   0, & \text{if} \,\, x \in N_{g_{k}} - (T_{i, \, \varepsilon_{k} } \cup Y_{i, \, k} )^{\circ}; \\
    & \\
   1, & \text{if} \,\, x \in \tilde{T}_{i, \, \varepsilon_{k}}.
  \end{array}
 \right.
\end{equation*}
Moreover, we let $\phi_{i, \ts k}(x) \in (0, \, 1)$ for $x \in Y_{i, \, k}^{\circ}.$ And for $(\varepsilon_{k} + \delta_{k}/2, \, \theta, \, t) \in Y_{i, \, k}^{\circ},$ with $ 0 \leqslant \theta \leqslant 2\pi, \, 0 \leqslant t \leqslant L_{i, \, k},$ the value of $\phi_{i, \, k}$ is assumed to be equal to $1/2.$ In terms of the cutoff function $\phi_{i, \ts k}$, we define a smooth Riemannian metric $\hat{\mathscr{G}}_{k}$ on $\overline{N}_{g_{k}}$ to be 
\begin{equation*}
 \hat{\mathscr{G}}_{k} = \left\{
 \begin{array}{ll}
  \mathscr{G}_{k} & \text{if} \, \, x \in N_{g_{k}} - (T_{i, \, \varepsilon_{k}} \cup Y_{i, \, k})^{\circ}; \\
   & \\
  \phi_{i, \, k} \cdot (f_{i, \, k}^{-1})^{*} \tilde{\mathcal{G}}_{k} + (1 - \phi_{i, \, k}) \, \mathscr{G}_{k} & \text{if} \, \, x \in Y_{i, \, k}^{\circ} = f_{i, \, k}\left( \tilde{Y}_{i, \ts k}^{\circ} \right); \\
   & \\
  \tilde{\mathcal{G}}_{k} & \text{if} \, \,  x \in \tilde{T}_{i, \, \varepsilon_{k}} .
 \end{array} 
 \right.
\end{equation*}
For each $i = 1, \, 2, \, \cdots \, , \, n_{k}$, we use the cutoff function $\phi_{i, \, k}$ in the Dehn surgery $\mathscr{D}_{i}$ to get a smooth Riemannian metric. After these $n_{k}$ Dehn surgeries, we have a smooth Riemannian metric $\mathscr{\G}_{k}$ defined on $N_{g_{k}}^{\prime}$. 

In addition, the fiber surface $\Sigma_{g_{k}}$ has a set of $2g_{k}$ nonseparating simple geodesic loops $\{\ell_{1}, \, \ell_{2}, \, \cdots \, , \, \ell_{2g_{k}}\}$ representing the basis of $H_{1}(\Sigma_{g_{k}}; \, \mathbb{Z})$. We perform a Dehn surgery around each geodesic loop $\ell_{j},$ with $j = 1, \, 2, \, \cdots \, , \, 2g_{k}$. All of these $2g_{k}$ Dehn surgeries in $N_{g_{k}}^{\prime}$ are performed at the following different regular fiber surface levels:
\begin{equation*}
 \ell_{1} \times \left\{ \frac{5}{6} \right\}, \, \, \ell_{2} \times \left\{ \frac{5}{6} + \frac{1}{12g_{k}} \right\} , \,\, \cdots , \, \, \ell_{2g_{k}} \times \left\{ \frac{5}{6} + \frac{2g_{k} - 1}{12g_{k}} \right\} .
\end{equation*}
Hence together with Dehn surgeries $\mathscr{D}_{i}$ corresponding to Dehn twists $\sigma_{i},$ all Dehn surgeries are pairwise disjoint when the radius $\varepsilon_{k}$ of removed out solid tori is small enough. Compared with Dehn surgeries $\mathscr{D}_{i}$ corresponding to Dehn twists $\sigma_{i}$, the difference of Dehn surgeries here is that it is not needed to twist the filled in solid tori. Other steps are the same. After each Dehn surgery, the cutoff function technique is used to obtain a smooth Riemannian metric. Upon finishing these additional $2g_{k}$ Dehn surgeries, we obtain a $3$-manifold $N_{g_{k}}^{\prime\prime}$ whose double covering manifold is homeomorphic to $S^{2} \times S^{1}.$ Hence we have $\pi_{1}(N_{g_{k}}^{\prime\prime}) = \mathbb{Z}_{2} * \mathbb{Z}_{2}$, so that $N_{g_{k}}^{\prime\prime}$ is homeomorphic to $\RP^{3} \# \RP^{3}.$ We use $\RP^{3}\# \RP^{3}_{g_{k}}$ to denote $N_{g_{k}}^{\prime\prime}$ in the following. The smooth Riemannian metric on $\RP^{3} \# \RP^{3}_{g_{k}}$ obtained through the cutoff function technique is still denoted $\hat{\mathscr{G}}_{k}$ without any confusion. Therefore we have a sequence of Riemannian $3$-manifolds $\{( \RP^{3}\# \RP^{3}_{g_{k}}, \ts \hat{\mathscr{G}}_{k} ) \}_{k=1}^{\infty}.$ 
  
\subsection{$\Z_{2}$-coefficient homology $(1, 2)$-systolic freedom}

We show that under the sequence of Riemannian metrics $\{\hat{\mathscr{G}}_{k}\}_{k=1}^{\infty}$,
\begin{equation}
 \inf_{k} \ts \frac{\vol_{\hat{\mathscr{G}}_{k}}(\RP^{3}\# \RP^{3}_{g_{k}})}{\sys H_{1}(\RP^{3} \# \RP^{3}_{g_{k}}, \hat{\mathscr{G}}_{k}; \Z_{2}) \cdot \sys H_{2}(\RP^{3} \# \RP^{3}_{g_{k}}, \hat{\mathscr{G}}_{k}; \Z_{2}) } = 0.
 \label{semibundle} 
\end{equation}
The equality (\ref{semibundle}) implies that $\RP^{3} \# \RP^{3}$ is of $\Z_{2}$-coefficient homology $(1, 2)$-systolic freedom.

We prove (\ref{semibundle}) by establishing estimates similar to those in Theorem \ref{Fre_estimation}. For $\Z_{2}$-coefficient homology $1$-systole, we have the following lower bound estimation.
\begin{proposition}
 When $k$ is large enough, the $\mathbb{Z}_{2}$-coefficient homology $1$-systole of $( \RP^{3} \# \RP^{3}_{g_{k}}, \, \hat{\mathscr{G}}_{k})$ satisfies
 \begin{equation}
  \sys H_{1}(\RP^{3} \# \RP^{3}_{g_{k}}, \hat{\mathscr{G}}_{k}; \mathbb{Z}_{2}) \geqslant s_{1} \tts ( \log{g_{k}} )^{1/2},
  \label{RP3_1}
 \end{equation}
 where $s_{1}$ is a positive constant independent of $g_{k}.$
\end{proposition}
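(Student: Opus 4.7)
The plan is to pull the estimate back to Freedman's bound on the double cover $\tilde M = S^{2}\times S^{1}_{g_{k}}$ via the semibundle structure described in Section 6.1. First, any loop $\ell$ representing a nontrivial class in $H_{1}(\RP^{3}\#\RP^{3}_{g_{k}};\Z_{2})$ is by definition not null-homologous, hence not null-homotopic; therefore
\[
\sys H_{1}(\RP^{3}\#\RP^{3}_{g_{k}},\hat{\mathscr{G}}_{k};\Z_{2}) \;\geqslant\; \sys\pi_{1}(\RP^{3}\#\RP^{3}_{g_{k}},\hat{\mathscr{G}}_{k}).
\]
By the symmetric choice of Dehn-surgery data in Section 6.1, the covering involution $\alpha_{H}$ remains an isometry after surgery, so $(\tilde M,\widehat{\G}_{k})$ is isometric to Freedman's manifold $(S^{2}\times S^{1}_{g_{k}},\widehat{\G}_{k})$ from Section 4, and $\RP^{3}\#\RP^{3}_{g_{k}}$ is an $H$-semibundle with halving $H = \pi_{1}(\tilde M) \subset \pi_{1}(\RP^{3}\#\RP^{3}_{g_{k}}) = \Z_{2}*\Z_{2}$. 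Applying Proposition~\ref{semibundle_1} yields
\[
\sys\pi_{1}(\RP^{3}\#\RP^{3}_{g_{k}},\hat{\mathscr{G}}_{k}) \;\geqslant\; \tfrac{1}{2}\,\sys\pi_{1}(S^{2}\times S^{1}_{g_{k}},\widehat{\G}_{k}).
\]

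The remaining task is to establish $\sys\pi_{1}(S^{2}\times S^{1}_{g_{k}},\widehat{\G}_{k}) \geqslant c\,(\log g_{k})^{1/2}$ for some positive $c$ independent of $g_{k}$. I would obtain this by recycling the geometric argument behind part (1) of Freedman's estimate (Theorem~\ref{Fre_estimation}). The key observation is that the obstruction to short essential loops is purely geometric and does not exploit the $\Z_{2}$-homology class of the loop. Because the Dehn-surgery solid tori $T_{i,\varepsilon_{k}}$ have radius $\varepsilon_{k} \leqslant 1/C(g_{k})$, a short essential loop in $\tilde M$ must remain in the complement, where the metric is locally $\mathbb{H}^{2}\times\R$. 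Such a loop either stays inside a single fiber surface $\Sigma_{g_{k}}$, in which case its length is at least $\tfrac{1}{2}\sys\pi_{1}(\Sigma_{g_{k}},\G_{\mathbb{H}^{2}}) \geqslant \tfrac{C_{2}}{2}\log g_{k}$ by the injectivity-radius lower bound, or it winds through the circle direction of the mapping torus, in which case the finite order $\mathrm{Order}(\tau_{k})\geqslant c_{2}(\log g_{k})^{1/2}$ of the monodromy forces it to cross at least that many distinct fiber levels, contributing length of order $(\log g_{k})^{1/2}$. Combining with the two displays above gives the proposition with $s_{1} = c/2$.

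The principal obstacle is this third step: one must verify that Freedman's proof of Theorem~\ref{Fre_estimation}(1) genuinely yields a homotopy-systole bound on $\tilde M$, and not merely the stated $\Z_{2}$-homology-systole bound. This reduces to confirming that the case analysis — a short essential loop either lies in a single fiber or traverses the mapping-torus direction — uses only essentiality in $\pi_{1}(\tilde M)$ and not the mod-$2$ class of the loop. Since the surface-bundle structure of $M_{g_{k}}$ and the $\mathbb{H}^{2}\times\R$ local model are preserved outside the small Dehn-surgery regions, this verification should be straightforward up to tracking constants.
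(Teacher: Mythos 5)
Your reduction $\sys H_{1}\geqslant\sys\pi_{1}$ and your idea to invoke the semibundle double-cover inequality are both used in the paper, but you apply them in the wrong place, and the one concrete claim you rely on is false. You assert that the double cover of $\RP^{3}\#\RP^{3}_{g_{k}}$ is isometric to Freedman's manifold $(S^{2}\times S^{1}_{g_{k}},\widehat{\G}_{k})$ from Section~4. It is not: each Dehn surgery performed on the semibundle $N_{g_{k}}$ lifts to \emph{two} Dehn surgeries on the covering mapping torus $M_{g_{k}}$, so the double cover $(S^{2}\times S^{1}_{g_{k}})'$ carries $2(n_{k}+2g_{k})$ surgeries where Freedman's has $n_{k}+2g_{k}$ — the paper says this explicitly in the proof of the $2$-systole bound (``the number of Dehn surgeries is doubled''). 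Consequently you cannot quote Theorem~\ref{Fre_estimation}(1) as a statement about $\tilde M$, and what remains of your argument is a promissory note (``the principal obstacle is this third step'') rather than a proof: you would have to re-establish a \emph{homotopy}-systole lower bound on a post-surgery manifold whose metric is not the clean $\mathbb{H}^{2}\times\R$ metric but has been patched by cutoff functions around the surgery tori. A further, smaller issue is that $\RP^{3}\#\RP^{3}_{g_{k}}$ is not literally an $H$-semibundle in the sense of Definition~\ref{semi} after surgery (the fibering map to $D^{1}$ is destroyed); Proposition~\ref{semibundle_1} can still be applied because its proof only uses the double covering, but this needs to be said.

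The paper sidesteps all of this by ordering the reductions the other way: it applies the semibundle inequality to the \emph{pre-surgery} pair $(N_{g_{k}},M_{g_{k}})$, where the metric is genuinely locally $\mathbb{H}^{2}\times\R$ and Freedman's Proposition~2.3 in \cite{Fr99} already gives $\sys\pi_{1}(M_{g_{k}},\G_{k})\geqslant C'(\log g_{k})^{1/2}$ with no extra work. The Dehn surgeries are then absorbed by a separate arc-substitution argument on the post-surgery manifold: a noncontractible loop $\gamma$ in $\RP^{3}\#\RP^{3}_{g_{k}}$ is pushed off the surgery tori onto their boundary tori, producing a homotopic loop $\gamma'$ in the complement whose length differs from that of $\gamma$ by at most $\tfrac{5\pi}{2}(C(g_{k})+2g_{k})\varepsilon_{k}$, which is bounded by an absolute constant once $\varepsilon_{k}=1/(g_{k}C(g_{k}))$. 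This decouples the geometric estimate on $M_{g_{k}}$ from the surgery perturbation, which is exactly the work your proposal leaves undone.
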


\begin{proof}
Let $\gamma$ be a noncontractible loop in $\RP^{3} \# \RP^{3}_{g_{k}}.$ If $\gamma$ intersects with a solid torus $\tilde{T}_{i, \, \varepsilon_{k} + \delta_{k}}$ in Dehn surgeries, we use a geodesic arc on the boundary torus $\partial \tilde{T}_{i, \, \varepsilon_{k} + \delta_{k}}$ to substitute the arc inside of the solid torus. The substitution geodesic arc on the boundary torus is composed with a geodesic arc parallel to the longitude geodesic loop and another geodesic arc parallel to the meridian geodesic loop. After such a substitution, the length increase will not exceed the length of meridian geodesic loop of the boundary torus. When we finish all possible substitutions by considering all Dehn surgeries, we have a new loop $\gamma^{\prime} \subset \RP^{3} \# \RP^{3}_{g_{k}}.$ Assume that $k$ is large enough. The radius $\varepsilon_{k}$ of solid tori would be small enough. So that $\gamma^{\prime}$ is homotopic to $\gamma$, which is noncontractible. We have the following estimation for the length difference,
\begin{align*}
 \left| \length_{\hat{\mathscr{G}}_{k}}(\gamma) - \length_{\hat{\mathscr{G}}_{k}}(\gamma^{\prime}) \right| & \leqslant 2 \pi (\varepsilon_{k} + \delta_{k}) \ts (n_k + 2g_{k} ) \\
      & \leqslant \frac{5\pi}{2} (C(g_k) + 2g_k) \ts \varepsilon_{k},
\end{align*}  
where the last inequality holds because of Theorem \ref{Dehn-Lickorish-Luo}, also see Freedman's example in Section 4. Assume that $\varepsilon_{k} = \frac{1}{g_k \cdot C(g_k)}$. When $k$ is large enough, there exists a constant $C$ independent of $g_{k}$ (for example, we can let $C = 1$) such that
\begin{align*}
 \length_{\hat{\mathscr{G}}_{k}}(\gamma) \geqslant \length_{\hat{\mathscr{G}}_{k}}(\gamma^{\prime}) - C .
\end{align*} 
As $\gamma^{\prime}$ has no intersection with solid tori of Dehn surgeries, it is a noncontractible loop in the semibundle $N_{g_{k}}$ after we do reverse Dehn surgeries on $\RP^{3} \# \RP^{3}_{g_{k}}.$ Therefore we have
\begin{align*}
 \length_{\hat{\mathscr{G}}_{k}}(\gamma) & \geqslant \length_{\hat{\mathscr{G}}_{k}}(\gamma^{\prime}) - C \\
                                         & \geqslant \sys \pi_{1}(N_{g_{k}}, \mathscr{G}_{k}) - C \\
                                         & \geqslant \frac{1}{2} \, \sys \pi_{1}(M_{g_{k}}, \mathcal{G}_{k}) - C, 
\end{align*}
where the last inequality holds because of inequality (\ref{semi_1}) in Proposition \ref{semibundle_1}. The proof of Proposition 2.3 in Freedman \cite{Fr99} in fact implies that 
\begin{equation*}
 \sys \pi_{1}(M_{g_{k}}, \, \mathcal{G}_{k}) \geqslant C^{\prime} \, (\log{g_{k}})^{1/2}, 
\end{equation*}
where $C^{\prime}$ is a constant independent of $g_{k}.$ Hence we have
\begin{equation*}
 \length_{\hat{\mathscr{G}}_{k}}(\gamma)  \geqslant \frac{1}{2} C^{\prime} \, (\log{g_{k}})^{1/2} - C.
\end{equation*}
After taking the infimum over all noncontractible loops $\gamma$ in $\RP^{3} \# \RP^{3}_{g_{k}}$, we have
\begin{equation*}
 \sys \pi_{1}(\RP^{3} \# \RP^{3}_{g_{k}}, \, \hat{\mathscr{G}_{k}})  \geqslant s_{1} \ts (\log{g_{k}})^{1/2},
\end{equation*}
where $s_{1}$ is a positive constant independent of $g_{k}.$ The inequality (\ref{RP3_1}) is implied by the following property of systoles:
\begin{equation*}
 \sys H_{1}(\RP^{3} \# \RP^{3}, \hat{\mathscr{G}}_{k}; \Z_{2}) \geqslant \sys \pi_{1}(\RP^{3} \# \RP^{3}_{g_{k}}, \hat{\mathscr{G}}_{k}).
\end{equation*}
\end{proof}

For $\Z_{2}$-coefficent homology $2$-systole, we have the following lower bound estimation.
\begin{proposition}
 When $k$ is large enough, the $\mathbb{Z}_{2}$-coefficient homology $2$-systole of $( \RP^{3} \# \RP^{3}_{g_{k}}, \hat{\mathscr{G}}_{k} )$ satisfies 
 \begin{equation}
  \sys H_{2}(\RP^{3} \# \RP^{3}_{g_{k}}, \hat{\mathscr{G}}_{k}; \Z_{2}) \geqslant s_{2} \ts g_{k},
  \label{RP3_2}
 \end{equation}
 where $s_{2}$ is a positive constant independent of $g_{k}$.
\end{proposition}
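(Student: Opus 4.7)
The plan is to reduce to Freedman's 2-systolic bound (Theorem \ref{Fre_estimation}(2)) via the double cover $p : (S^{2} \times S^{1}_{g_{k}}, \hat{\mathcal{G}}_{k}) \to (\RP^{3} \# \RP^{3}_{g_{k}}, \hat{\mathscr{G}}_{k})$, paralleling the argument for (\ref{RP3_1}). Given a nonseparating surface $\Sigma$ representing a nontrivial class in $H_{2}(\RP^{3} \# \RP^{3}; \Z_{2})$, I would first push $\Sigma$ off each filled-in solid torus $\tilde{T}_{i, \, \varepsilon_{k} + \delta_{k}}$ by replacing every component of $\Sigma \cap \tilde{T}_{i,\, \varepsilon_{k}+\delta_{k}}$ by a subsurface on the boundary torus composed of pieces parallel to the meridian and the longitude. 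With $\varepsilon_{k} = 1/(g_{k} \cdot C(g_{k}))$ as in Section 6, the area increase per surgery is at most $2\pi(\varepsilon_{k}+\delta_{k})(L_{i,k} + O(\varepsilon_{k}))$, so summing over the $n_{k} + 2g_{k}$ surgeries gives a total increase bounded by a constant $C$ independent of $g_{k}$. The modified surface $\Sigma'$ lies in the complement of the Dehn-surgery solid tori, and $\area_{\hat{\mathscr{G}}_{k}}(\Sigma) \geqslant \area_{\hat{\mathscr{G}}_{k}}(\Sigma') - C$.

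Next I would lift $\Sigma'$ along $p$, which is a local isometry, to obtain $\tilde{\Sigma}' = p^{-1}(\Sigma')$ with $\area(\tilde{\Sigma}') = 2 \area(\Sigma')$. Since $H_{2}(\RP^{3} \# \RP^{3}; \Z_{2}) \cong \Z_{2} \oplus \Z_{2}$ while $H_{2}(S^{2} \times S^{1}; \Z_{2}) \cong \Z_{2}$, the Gysin sequence shows that the kernel of the pullback $p^{*}$ on $H^{1}$ is one-dimensional, spanned by the classifying class $a^{*} + b^{*}$ of the cover; Poincar\'e-dually, two of the three nontrivial $H_{2}$-classes (the individual classes $[\RP^{2}_{1}]$ and $[\RP^{2}_{2}]$) lift to the nontrivial class in $H_{2}(S^{2} \times S^{1}_{g_{k}}; \Z_{2})$. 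For a representative $\Sigma$ of either of those, $\tilde{\Sigma}'$ is nonseparating and Theorem \ref{Fre_estimation}(2) yields $\area(\tilde{\Sigma}') \geqslant c_{5} g_{k}$, so $\area(\Sigma) \geqslant c_{5} g_{k}/2 - C$.

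The main obstacle is the remaining class $[\RP^{2}_{1}] + [\RP^{2}_{2}]$, for which $\tilde{\Sigma}'$ is \emph{separating} in $S^{2} \times S^{1}_{g_{k}}$ and Freedman's bound does not apply directly. I would exploit the covering symmetry: since $\Sigma$ is nonseparating in $\RP^3 \# \RP^3_{g_k}$ while $\tilde{\Sigma}'$ is not in its double cover, the deck involution must interchange the two components $V_{1}, V_{2}$ of $(S^{2} \times S^{1}_{g_{k}}) \setminus \tilde{\Sigma}'$, forcing $\vol(V_{i}) = \tfrac{1}{2} \vol(S^{2} \times S^{1}_{g_{k}})$. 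Because $\vol_{\mathcal{G}_{k}}(M_{g_{k}}) = 4\pi(g_{k} - 1)$ by Gauss-Bonnet applied to the hyperbolic fiber times the unit $S^{1}$, and the Dehn surgeries are carried out in solid tori of radius $\varepsilon_{k}$ whose contribution is negligible, one has $\vol(V_{i}) \geqslant c\, g_{k}$. A uniform linear isoperimetric inequality on $(S^{2} \times S^{1}_{g_{k}}, \hat{\mathcal{G}}_{k})$ --- which I would derive from the locally $\mathbb{H}^{2} \times \R$ structure of the metric together with the Buser-Sarnak lower bound $\lambda_{1}(\Sigma_{g_{k}}) \geqslant c_{1}$ noted in Section 4 --- then produces a Cheeger-type constant $h$ independent of $g_{k}$ with $\area(\tilde{\Sigma}') \geqslant h \cdot \vol(V_{1}) \geqslant c''\, g_{k}$.

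Combining the two cases and absorbing the additive constant $C$ for $g_{k}$ large enough yields $\area(\Sigma) \geqslant s_{2}\, g_{k}$ for some $s_{2} > 0$ independent of $g_{k}$, which is (\ref{RP3_2}). I expect the most delicate step to be establishing the uniform linear isoperimetric inequality through the surgery regions in the third-class case; everything else is a direct adaptation of Freedman's argument and of the preceding $\sys H_{1}$ estimate.
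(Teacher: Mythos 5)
Your proposal takes a genuinely different route from the paper on the 2-systole bound, and in doing so it surfaces an issue the paper does not address. The paper's proof is much shorter: take the area-minimizing embedded representative $X_k$ of a nonzero $\Z_2$-class, pass to the two-sheeted Riemannian cover $((S^2\times S^1_{g_k})',\G'_k)$ of $(\RP^3\#\RP^3_{g_k},\hat{\mathscr{G}}_k)$ (which is built from $M_{g_k}$ by the doubled family of Dehn surgeries, so Freedman's lower bound $\sys H_2 \geq C g_k$ applies to it directly), and assert that $X_k$ either lifts to a nonseparating surface or is doubly covered by a nonseparating surface, giving $\area(X_k) \geq \tfrac{1}{2}C g_k$. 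The push-off step you perform at the start is therefore unnecessary in the paper's version, since the paper never appeals to Freedman's original $(S^2\times S^1_{g_k},\hat{\G}_k)$ but to the cover $((S^2\times S^1_{g_k})',\G'_k)$ itself.

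The important observation in your proposal is the transfer computation. Writing $a=[\RP^2_1]$, $b=[\RP^2_2]$ for the two generators of $H_2(\RP^3\#\RP^3;\Z_2)\cong\Z_2\oplus\Z_2$, the transfer sends $a$ and $b$ to the generator of $H_2(S^2\times S^1;\Z_2)$ but annihilates $a+b$. Consequently, for a nonseparating surface $\Sigma$ with $[\Sigma]=a+b$, the full preimage $p^{-1}(\Sigma)$ is null-homologous and hence separating in the cover; if moreover $\Sigma$ is nonorientable (equivalently, one-sided, equivalently, $\Sigma$ does not lift), then $p^{-1}(\Sigma)$ is connected, so it is the double cover of $\Sigma$ and is \emph{separating}. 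In that situation neither alternative of the paper's dichotomy ("lifts to a nonseparating surface, or is doubly covered by a nonseparating surface") holds, so the paper's argument as written does not cover this case. You have correctly identified a real gap. (For $[\Sigma]=a$ or $b$ the preimage is forced to be connected and nonseparating, and if $[\Sigma]=a+b$ but $\Sigma$ is orientable, then each of the two lifts is nonseparating; so the only problematic case is a nonorientable minimizer in the class $a+b$.)

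Your proposed repair for that case, however, is itself not established. The assertion of a "uniform linear isoperimetric inequality on $(S^2\times S^1_{g_k},\hat{\G}_k)$" with a $g_k$-independent constant is the crux and you do not prove it. The metric is locally $\mathbb{H}^2\times\R$ only away from the surgery regions and is flat in solid tori of radius $\varepsilon_k=1/(g_k C(g_k))\to 0$; passing from the Buser--Sarnak bound $\lambda_1(\Sigma_{g_k})\geq c_1$ (which, via Buser's inequality, controls the Cheeger constant of the \emph{fiber surface}) to a uniform Cheeger constant for the surgered 3-manifold is a nontrivial claim that requires its own argument, especially to rule out thin separating surfaces that exploit the degenerating surgery tubes. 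Also, before invoking the deck-involution symmetry to split the volume, you must check that the pushed-off surface $\Sigma'$ is still nonseparating and still represents the same class; otherwise the argument that $\sigma$ interchanges $V_1$ and $V_2$ does not get off the ground. In short: your diagnosis of the third class is correct and the paper's proof needs supplementing there, but the Cheeger-type patch is a sketch, not a proof, and the most delicate step is precisely the one left unverified.
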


\begin{proof}
Assume that $X_{k}$ is a smooth embedded surface in $\RP^{3} \# \RP^{3}_{g_{k}}$, which is area minimizing among all surfaces representing a nonzero homology class in $H_{2}(\RP^{3} \# \RP^{3}_{g_{k}}; \Z_{2})$. The existence of $X_{k}$ is guaranteed by the geometric measure theory. We assume that $( (S^{2} \times S^{1}_{g_{k}})^{\prime}, \G_{k}^{\prime})$ is the Riemannian covering manifold of $( \RP^{3} \# \RP^{3}_{g_{k}}, \hat{\mathscr{G}}_{k} )$. Compared with $S^{2} \times S^{1}_{g_{k}}$ in Freedman's example, the $3$-manifold $(S^{2} \times S^{1}_{g_{k}})^{\prime} $ is obtained from $M_{g_{k}}$ by doing similar Dehn surgeries, but the number of Dehn surgeries is doubled. The smooth Riemannian metric $\G_{k}^{\prime}$ on $(S^{2} \times S^{1}_{g_{k}})^{\prime}$ is obtained by applying the cutoff function technique, see Section 6.1. If we let the radius $\varepsilon_{k}$ of solid tori in Dehn surgeries performed on $M_{g_{k}}$ be small enough, by applying Freedman's technique ( see Theorem \ref{Fre_estimation} ), we have
\begin{equation}
 \sys H_{2}( (S^{2} \times S^{1}_{g_{k}})^{\prime}, \G^{\prime}_{k}; \Z_{2} ) \geqslant C \tts g_{k},
\end{equation} 
where $C$ is a positive constant independent of $g_{k}.$ On the other hand, the surface $X_{k}$ either can be lifted to a nonseparating surface $\tilde{X}_{k}$ in $(S^{2} \times S^{1}_{g_{k}})^{\prime}$, or is doubly covered by a nonseparating surface $\tilde{X}_{k}$ in $( S^{2} \times S^{1}_{g_{k}} )^{\prime}.$ Hence we have
\begin{align*}
 \area_{\hat{\mathscr{G}}_{k}} (X_{k}) & \geqslant \frac{1}{2} \tts \area_{\G^{\prime}_{k}} (\tilde{X}_{k}) \\ 
        & \geqslant \frac{1}{2} \tts \sys H_{2}( (S^{2} \times S^{1}_{g_{k}})^{\prime}, \G^{\prime}_{k}; \Z_{2} ) \\
        & \geqslant \frac{1}{2}  C \tts g_{k}.
\end{align*}  
Therefore,
\begin{equation*}
 \sys H_{2}(\RP^{3} \# \RP^{3}_{g_{k}}, \hat{\mathscr{G}}_{k}; \Z_{2}) \geqslant s_{2} \tts g_{k},
\end{equation*}
where $s_{2}$ is a positive constant independent of $g_{k}$. 
\end{proof}

For volume, we have the following upper bound estimation.
\begin{proposition}
 When $k$ is large enough, the volume of 
 \begin{equation*}
  (\RP^{3} \# \RP^{3}_{g_{k}}, \hat{\mathscr{G}}_{k} ) 
 \end{equation*}
 satisfies
 \begin{equation}
  \vol_{\hat{\mathscr{G}}_{k}}(\RP^{3} \# \RP^{3}_{g_{k}}) \leqslant s_{3} \ts g_{k},
  \label{RP3_3}
 \end{equation}
 where $s_{3}$ is a positive constant independent of $g_{k}.$
\end{proposition}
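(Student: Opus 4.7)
The plan is to decompose the total volume into the bulk volume of the underlying semibundle $N_{g_k}$ and the small perturbations introduced by each of the $n_k + 2g_k$ Dehn surgeries; the bulk already grows linearly in $g_k$ via Gauss--Bonnet, so the task reduces to making the surgery contributions negligible by choosing the radius $\varepsilon_k$ of the removed solid tori sufficiently small. More precisely, since $(N_{g_k}, \mathscr{G}_k)$ is doubly covered by the Riemannian mapping torus $(M_{g_k}, \G_k)$ whose metric is locally isometric to the standard product on $\mathbb{H}^2 \times \R$, with fiber $\Sigma_{g_k}$ and interval direction of length $1$, Fubini together with Gauss--Bonnet yields
\begin{equation*}
\vol_{\mathscr{G}_k}(N_{g_k}) = \tfrac{1}{2} \vol_{\G_k}(M_{g_k}) = 2\pi(g_k - 1) \leqslant c \ts g_k.
\end{equation*}

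Next, I would estimate the volume change caused by a single Dehn surgery $\mathscr{D}_i$. Removing $T_{i, \ts \varepsilon_k}^{\circ}$ only decreases the volume and can be discarded for the upper bound. The filled torus $\tilde{T}_{i, \ts \varepsilon_k + \delta_k}$ carries the pullback $(\beta_{i, k}^{-1})^{*} \bar{\mathcal{G}}_k$ of the Euclidean metric under a diffeomorphism, so its volume equals the Euclidean volume of $\bar{T}_{i, \ts \varepsilon_k + \delta_k}$, which in cylindrical coordinates is
\begin{equation*}
2\pi^2 \ts \varepsilon_k \left( \tfrac{L_{i, k}}{2\pi} + \delta_k \right)^{2} \leqslant c' \ts \varepsilon_k \ts C(g_k)^2,
\end{equation*}
using $L_{i, k} \leqslant C(g_k)$ from Theorem \ref{Dehn-Lickorish-Luo} and $\delta_k = \varepsilon_k / 4$. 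On the thin annular transition region $Y_{i, k}$, a direct coordinate computation shows that the constituent metrics $\mathscr{G}_k$ and $(f_{i, k}^{-1})^{*} \tilde{\mathcal{G}}_k$ are bi-Lipschitz equivalent with a universal constant, so the volume of $Y_{i, k}$ under the interpolated metric $\phi_{i, k} (f_{i, k}^{-1})^{*} \tilde{\mathcal{G}}_k + (1-\phi_{i, k}) \mathscr{G}_k$ is of order $\varepsilon_k \delta_k L_{i, k}$, strictly smaller than the filled torus contribution. The same estimate applies to the untwisted Dehn surgeries along the loops $\ell_j$.

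Finally, I would sum the contributions. Using $n_k \leqslant C(g_k)$ from Theorem \ref{Dehn-Lickorish-Luo}, the total volume added over all $n_k + 2g_k$ Dehn surgeries is at most $c'' \ts \varepsilon_k \ts C(g_k)^2 \ts (C(g_k) + 2 g_k)$. Choosing $\varepsilon_k$ small enough, say $\varepsilon_k \leqslant 1 / (g_k \cdot C(g_k)^3)$, bounds this sum by a universal constant. Combining with the bulk estimate yields
\begin{equation*}
\vol_{\hat{\mathscr{G}}_k}(\RP^{3} \# \RP^{3}_{g_k}) \leqslant 2\pi(g_k - 1) + O(1) \leqslant s_3 \ts g_k
\end{equation*}
for $k$ sufficiently large. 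The main obstacle is the potentially enormous size of $C(g_k)$ (a tower of exponentials in $g_k$ by the theorem of Freedman, Meyer and Luo), which forces $\varepsilon_k$ to be chosen astronomically small; one must verify that a single $\varepsilon_k$ simultaneously satisfies the smallness requirements arising in the $1$-systole, $2$-systole, and volume estimates of this section, but since each asks only that $\varepsilon_k$ be sufficiently small relative to some function of $g_k$, their intersection is non-empty.
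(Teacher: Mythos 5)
Your proof is correct, but it takes a different route than the paper's. The paper passes to the two-sheeted Riemannian cover $((S^2\times S^1_{g_k})',\,\G_k')$, invokes Freedman's volume estimate (Theorem 4.2(3)) to get $\vol_{\G_k'}((S^2\times S^1_{g_k})')\leqslant C\tts g_k$, and then uses the exact relation $\vol_{\hat{\mathscr{G}}_k}(\RP^3\#\RP^3_{g_k})=\tfrac12\vol_{\G_k'}((S^2\times S^1_{g_k})')$, which holds because the covering is a local isometry of degree two. You instead work directly on $\RP^3\#\RP^3_{g_k}$, splitting the volume into the bulk semibundle part (bounded by Gauss--Bonnet and Fubini) and the surgery contributions, which you control by an explicit Euclidean coordinate computation in each filled-in solid torus and transition annulus. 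Your route is more self-contained: the paper's citation of Theorem 4.2(3) is not quite literal, since that theorem concerns $S^2\times S^1_{g_k}$ (with $n_k+2g_k$ surgeries) rather than the actual cover $(S^2\times S^1_{g_k})'$ (with $2(n_k+2g_k)$ surgeries), so strictly one must re-run Freedman's volume argument with the doubled surgery count — which is exactly the computation you carry out. One point worth flagging: your choice $\varepsilon_k\leqslant 1/(g_k\cdot C(g_k)^3)$ is strictly smaller than the value $\varepsilon_k=1/(g_k\cdot C(g_k))$ the paper nominates in the proof of Proposition 6.1, and with the paper's larger value your surgery-volume bound $c''\tts\varepsilon_k\tts C(g_k)^2(C(g_k)+2g_k)\sim C(g_k)^2/g_k$ is not controlled; so your closing remark that one must check a single $\varepsilon_k$ serves all three estimates simultaneously is not a formality but a genuine correction to how the paper presents its choice of $\varepsilon_k$ (the fix, as you say, is to take $\varepsilon_k$ to be the minimum of the three requirements, all of which are monotone in the right direction).
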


\begin{proof}
To be the same as above, we use $( (S^{2} \times S^{1}_{g_{k}})^{\prime}, \G_{k}^{\prime} )$ to denote the two-sheeted Riemannian covering manifold of $( \RP^{3} \# \RP^{3}_{g_{k}}, \hat{\mathscr{G}}_{k} )$. When $k$ is large enough, the radius $\varepsilon_{k}$ of solid tori in Dehn surgeries on $M_{g_{k}}$ is small enough, then applying Freedman's technique ( see Theorem \ref{Fre_estimation} ), we have 
\begin{equation*}
 \vol_{\G^{\prime}_{k}}((S^{2} \times S^{1}_{g_{k}})^{\prime}) \leqslant C \tts g_{k},
\end{equation*}
where $C$ is a positive constant independent of $g_{k}.$ On the other hand, we have
\begin{equation*}
 \vol_{\hat{\mathscr{G}}_{k}}(\RP^{3} \# \RP^{3}_{g_{k}}) = \frac{1}{2} \tts \vol_{\G^{\prime}_{k}}( (S^{2} \times S^{1}_{g_{k}})^{\prime} ) .  
\end{equation*}
Hence the inequality (\ref{RP3_3}) holds.
\end{proof}

Based on above estimates (\ref{RP3_1}), (\ref{RP3_2}) and (\ref{RP3_3}), we have
\begin{align*}
 & \frac{\vol_{\hat{\mathscr{G}}_{k}}(\RP^{3} \# \RP^{3}_{g_{k}})}{\sys H_{1}(\RP^{3} \# \RP^{3}_{g_{k}}, \hat{\mathscr{G}}_{k}; \Z_{2}) \cdot \sys H_{2}( \RP^{3} \# \RP^{3}_{g_{k}}, \hat{\mathscr{G}}_{k}; \Z_{2})} \\
 & \leqslant \frac{s_{3} \tts g_{k}}{s_{1} \tts (\log{g_{k}})^{1/2} \cdot s_{2} \tts g_{k}}.
\end{align*}
Therefore, we have (\ref{semibundle}) by letting $k \to \infty.$

\section*{Acknowledgments}
I am grateful to my Ph.D. advisor Dr. Weiping Li for many helpful discusstions on the topic of systolic freedom. Also I appreciate Dr. Mikhail Katz for pointing out to me that E. Fetaya has another interpretation of Freedman's work .

\end{document}